\newtheorem{theorem}{Theorem}[section]
\theoremstyle{definition}
\theoremstyle{remark}
\theoremstyle{plain}
\newtheorem{lemma}[theorem]{Lemma}
\newtheorem{proposition}[theorem]{Proposition}
\newcommand{\diag}{\text{diag}}
\newcommand{\bq}{/\!\!/}
\begin{document}

\title{Quasi-positive curvature on a biquotient of $Sp(3)$}

\author{Jason DeVito and Wesley Martin}




\date{}

\maketitle

\begin{abstract}
Suppose $\phi_3:Sp(1)\rightarrow Sp(2)$ denotes the unique irreducible $4$-dimensional representation of $Sp(1) = SU(2)$ and consider the two subgroups $H_1, H_2\subseteq Sp(3)$ with $H_1 = \{\diag(\phi_3(q_1), q_1): q_1 \in Sp(1)\}$ and $H_2 = \{\diag(\phi_3(q_2),1):q_2\in Sp(1)\}$.  We show that the biquotient $H_1\backslash Sp(3)/H_2$ admits a quasi-positively curved Riemannian metric.

\end{abstract}

\section{Introduction}\label{intro}

Manifolds of positive sectional curvature have been studied extensively.  Despite this, there are very few known examples of positively curved manifolds.  In fact, other than spheres and projective spaces, every known compact simply connected manifold admitting a metric of positive curvature is diffeomorphic to an Eschenburg space \cite{Es1,AW}, Eschenburg's inhomogoneous flag manifold, the projectivized tangent bundle of $\mathbb{K}P^2$ with $\mathbb{K}\in\{\mathbb{C},\mathbb{H},\mathbb{O}\}$ \cite{Wa}, a Bazaikin space \cite{Ba1}, the Berger space \cite{Ber}, or a certain cohomogeneity one manifold which is homeomorphic, but not diffeomorphic, to $T^1 S^4$ \cite{De,GVZ}.

Because of the difficulty in constructing new examples, attention has turned to the easier problem of finding examples with quasi- or almost positive curvature.  Recall that a Riemannian manifold is said to be quasi-positively curved if it admits a non-negatively curved metric with a point $p$ for which the sectional curvatures of all $2$-planes at $p$ are positive.  A Riemannian manifold is called almost positively curved if the set of points for which all $2$-planes are positively curved is dense.  Examples of manifolds falling into either of these cases are more abundant.  See \cite{DDRW,Di1,EK,GrMe1,Ke1,Ke2,KT,PW2,Ta1,W,Wi}.

In \cite{DDRW}, the first author, together with DeYeso, Ruddy, and Wesner prove that there are precisely $15$ biquotients of the form $Sp(3)\bq Sp(1)^2$ and show that $8$ of them admit quasi-positively curved metrics.  We show that their methods can be adapted to work on a $9$th example, called $N_9$ in \cite{DDRW}.  That is, we show $N_9$ admits a metric of quasi-positive curvature as well.

To describe this example, we first set up notation.  Let $\phi_3:Sp(1) = SU(2)\rightarrow Sp(2)$ denote the unique irreducible $4$-dimensional representation of $Sp(1)$.  Further, let $G = Sp(3)$, and let $H_1 =\{ \diag(\phi_3(q_1), q_1)\in G: q_1 \in Sp(1)$ and $H_2 = \{\diag(\phi_3(q_2),1)\in G): q_2\in Sp(1)\}$.  Finally, set $H = H_1\times H_2\subseteq G\times G$.

\begin{theorem}\label{main}  The biquotient $H_1\backslash G/H_2$ admits a metric of quasi-positive curvature.
\end{theorem}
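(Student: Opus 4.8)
The plan is to follow the method of \cite{DDRW}. Fix a bi-invariant metric $\langle\cdot,\cdot\rangle$ on $G=Sp(3)$. Since the two-sided $H$-action on $G$ is free (this is verified in \cite{DDRW}, where the quotient appears as $N_9$), $M=H_1\backslash G/H_2$ is a closed $15$-manifold and $\langle\cdot,\cdot\rangle$ makes $G\to M$ a Riemannian submersion with totally geodesic fibres, so the quotient metric is non-negatively curved by Gray--O'Neill. Identifying $T_gG$ with $\mathfrak g=\mathfrak{sp}(3)$ via a left translation, the vertical space at $g$ is $\mathcal V_g=\operatorname{Ad}_{g^{-1}}\mathfrak h_1+\mathfrak h_2$, a $2$-plane of $T_{[g]}M$ lifts to a $2$-plane $\operatorname{span}(X,Y)\subseteq\mathcal V_g^{\perp}$, and this lift is flat exactly when $[X,Y]=0$; so with the bi-invariant metric $M$ has positive curvature at $[g]$ iff the $15$-dimensional subspace $\mathcal V_g^{\perp}$ contains no abelian $2$-plane of $\mathfrak{sp}(3)$.

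To reduce the supply of flats one Cheeger-deforms $\langle\cdot,\cdot\rangle$. Writing $M\cong\Delta G\backslash(G\times G)/(H_1\times H_2)$, this amounts to deforming the product bi-invariant metric along suitable subgroups of $\Delta G$ and of $H_1\times H_2$ — equivalently, deforming $\langle\cdot,\cdot\rangle$ on $G$ along the left action of a subgroup $K_1\supseteq H_1$ and the right action of a subgroup $K_2\supseteq H_2$; since $H_1\subseteq Sp(2)\times Sp(1)$ and $H_2\subseteq Sp(2)\times 1$ one may take, for instance, $K_1=Sp(2)\times Sp(1)$ and $K_2=Sp(2)\times 1$. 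The resulting metric $g_t$ is still left-$H_1$- and right-$H_2$-invariant and non-negatively curved, hence induces a non-negatively curved metric $\bar g_t$ on $M$. By the standard analysis of Cheeger deformations, for small $t>0$ the zero-curvature planes of $(G,g_t)$ are those $\operatorname{span}(X,Y)$ with $[X,Y]=0$ that \emph{also} satisfy an additional bracket condition on the $\mathfrak k_i$-components of $X$ and $Y$; hence a $2$-plane of $(M,\bar g_t)$ can be flat only if it lifts to a $g_t$-horizontal plane meeting all of these conditions and, in addition, with vanishing O'Neill $A$-tensor.

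It then remains to produce one orbit $[g_0]\in M$ at which no such plane survives, i.e.\ a $g_0\in Sp(3)$ so that every $\operatorname{span}(X,Y)$ in the $g_t$-horizontal space at $g_0$ with $[X,Y]=0$ fails the $\mathfrak k_i$-bracket condition (or has nonvanishing $A$-tensor). I would first test $g_0=e$, where the horizontal space is the $g_t$-orthogonal complement of $\mathfrak h_1+\mathfrak h_2$, and, failing that, a $g_0$ chosen generic with respect to $\phi_3(Sp(1))$; the verification is then a finite linear-algebra problem in $\mathfrak{sp}(3)$.

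The main obstacle is precisely this last verification. Because $H_1$ and $H_2$ are built from the irreducible representation $\phi_3$ rather than from standard representations of $Sp(1)$, as in the eight examples settled in \cite{DDRW}, the horizontal space and the abelian $2$-planes of $\mathfrak{sp}(3)$ meeting it are less transparent: one must first record the decomposition of $\mathfrak{sp}(3)$ under $\operatorname{Ad}(H)$ — in particular the branching of $\mathfrak{sp}(2)$ under $d\phi_3(\mathfrak{sp}(1))$ — then describe the relevant abelian $2$-planes, and finally carry out the ensuing case analysis. I expect this representation-theoretic bookkeeping, together with the case analysis it feeds, to be the principal difficulty; once it is in place, the argument should close along the lines of \cite{DDRW}.
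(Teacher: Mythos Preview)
Your outline matches the paper's strategy in broad strokes---Wilking's doubling combined with a Cheeger deformation, followed by locating a single orbit at which the resulting zero-curvature conditions have no nontrivial solution---but what you have written is a plan, not a proof. The theorem's entire content lies in the verification you flag as ``the principal difficulty'' and then defer; the paper devotes all of Section~3 to it, and it is neither routine nor a matter of representation-theoretic bookkeeping.

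Two concrete points where your plan would need repair. First, the metric: the paper does not deform along two different subgroups $K_1,K_2$ acting on the two sides. It Cheeger-deforms $g_0$ along the single subgroup $K=Sp(2)\times Sp(1)$ to obtain $g_1$, then equips $G\times G$ with $g_1+g_1$ and passes to $\Delta G\backslash(G\times G)/(H_1\times H_2)$. This yields the precise system of conditions (Theorem~\ref{preeqn}): orthogonality to $\mathrm{Ad}_p\mathfrak h_1$ and to $\mathfrak h_2$, together with the bracket conditions $[X,Y]=[X_{\mathfrak k},Y_{\mathfrak k}]=[X_{\mathfrak p},Y_{\mathfrak p}]=0$ \emph{and} the analogous conditions on $(\mathrm{Ad}_{p^{-1}}X)_{\mathfrak k,\mathfrak p}$. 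Your formulation does not produce this last $\mathrm{Ad}_{p^{-1}}$ layer, and without it the system is too weak. Second, the point: testing $g_0=e$ fails. The paper works at $p=p(\theta)$ a rotation in the $(1,3)$-plane with $\theta\in(0,\pi/6)$; the bounds $0<\theta<\pi/6$ are used repeatedly (e.g.\ to force certain sign contradictions), and the argument genuinely collapses at $\theta=0$.

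What actually makes the case analysis tractable is a structural reduction you do not anticipate: Proposition~\ref{niceform} uses the positive curvature of $G/K=\mathbb{H}P^2$ and of the Berger space $Sp(2)/\phi_3(Sp(1))$ to force any zero-curvature plane to be spanned by vectors with $X_{\mathfrak p}=0$ and $Y_{\mathfrak{sp}(2)}=0$. Only after this reduction does the problem become a finite system in seven quaternionic unknowns (Lemma~\ref{equations}), which is then eliminated through a chain of roughly ten propositions. None of this is visible from the $\mathrm{Ad}(H)$-decomposition of $\mathfrak{sp}(3)$ alone; the geometry of the intermediate homogeneous spaces is doing the work.
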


In fact, we show the metric constructed on $G$ in \cite{DDRW} is $H$ invariant and the induced metric on $N_9$ is quasi-positively curved.

Finally, we point out that one of the first steps in the proof, Proposition \ref{niceform}, does not hold for any of the remaining inhomogeneous biquotients of the form $Sp(3)\bq Sp(1)^2$.  In particular, a new approach is needed to determine whether these other biquotients admit metrics of quasi-positive curvature.

The outline of this paper is as follows.  Section $2$ will cover the necessary background, leading to a system of equations paramaterized by $p\in G$, which govern the existence of a zero curvature plane at $[p^{-1}]\in G\bq H$.  In Section 3, we find a particular point $p\in G$ for which there are no non-trivial solutions to the system of equations, establishing Theorem \ref{main}.

This research was partially supported by the Bill and Roberta Blankenship Undergraduate Research Endowment; we are grateful for their support.

\section{Background}\label{background}

We will use the setup of \cite{DDRW}.  As the calculations will be done on the Lie algebra level, we now describe all the relevant Lie algebras.

We recall the Lie algebra $\mathfrak{sp}(n)$ consists of all $n\times n$ quaternionic skew-Hermitian matrices with Lie bracket given by the commutator.  That is, $\mathfrak{sp}(n) =\{A\in M_n(\mathbb{H}): A + \overline{A}^t = 0\}$, where $\mathbb{H}$ denotes the skew-field of quaternions, and the Lie bracket is given by $[A,B] = AB - BA$.  When $n = 1$, this Lie algebra is simply $\operatorname{Im}\mathbb{H}$.

Then the Lie algebra of $G = Sp(3)$, $\mathfrak{g} = \mathfrak{sp}(3)$ consists of the $3\times 3$ skew-Hermitian matrices over $\mathbb{H}$.  Further, we set $K = Sp(2)\times Sp(1)$, block diagonally embedded into $G$ via $(A,q)\mapsto \diag(A,q)\in G $.  Then one easily sees that $\mathfrak{k} = \mathfrak{sp}(2)\oplus \mathfrak{sp}(1)$ is embedded into $\mathfrak{g}$ via $(B,r)\mapsto \diag(B,r)$.

We also use the description of $\phi_3$ on the Lie algebra level given by Proposition 4.5 of \cite{DDRW}.

\begin{proposition}  For $t = t_i + t_j + t_k \in \operatorname{Im}\mathbb{H} = \mathfrak{sp}(1),$ $$\phi_3(t) = \begin{bmatrix} 3t_i & \sqrt{3}(t_j + t_k)\\ \sqrt{3}(t_j + t_k) & 2(t_k - t_j) - t_i\end{bmatrix}$$ defines the unique irreducible $4$-dimensional representation of $\mathfrak{sp}(1) = \mathfrak{su}(2)$.

\end{proposition}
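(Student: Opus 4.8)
The plan is a direct verification in three steps, using that $\mathfrak{sp}(1) \cong \mathfrak{su}(2)$ has exactly one irreducible complex representation in each dimension. Write $M(t)$ for the displayed $2\times 2$ quaternionic matrix, where $t = t_i + t_j + t_k$ with $t_i \in \mathbb{R}i$, $t_j \in \mathbb{R}j$, $t_k \in \mathbb{R}k$.

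\emph{Step 1: $M$ maps into $\mathfrak{sp}(2)$.} I would check that $M(t) + \overline{M(t)}^{\,t} = 0$ for all $t$. The diagonal entries $3t_i$ and $2(t_k-t_j)-t_i$ lie in $\operatorname{Im}\mathbb{H}$, hence satisfy $\bar a = -a$, while the two off-diagonal entries are both equal to $\sqrt{3}(t_j+t_k)\in\mathbb{R}j\oplus\mathbb{R}k$; since $\bar j = -j$ and $\bar k = -k$ this gives $M(t)_{12} = -\,\overline{M(t)_{21}}$. Thus $M$ is a well-defined $\mathbb{R}$-linear map $\operatorname{Im}\mathbb{H}\to\mathfrak{sp}(2)$.

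\emph{Step 2 (the main computation): $M$ is a Lie algebra homomorphism.} Both $(s,t)\mapsto M([s,t])$ and $(s,t)\mapsto [M(s),M(t)] = M(s)M(t)-M(t)M(s)$ are $\mathbb{R}$-bilinear and alternating, so it suffices to verify $M([e_a,e_b]) = [M(e_a),M(e_b)]$ on the three pairs $(e_a,e_b)\in\{(i,j),(j,k),(k,i)\}$ of standard basis vectors of $\operatorname{Im}\mathbb{H}$, using $[i,j]=2k$, $[j,k]=2i$, $[k,i]=2j$. Each check is the product of two explicit $2\times 2$ quaternionic matrices compared against $M$ of the appropriate basis vector. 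I expect this bookkeeping — correctly handling the noncommutativity of $\mathbb{H}$ — to be the only genuine obstacle, and a routine one.

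\emph{Step 3: $\phi_3:=M$ is the $4$-dimensional irreducible.} Composing with $\mathfrak{sp}(2)\hookrightarrow\mathfrak{u}(4)$ gives a representation on $\mathbb{H}^2\cong\mathbb{C}^4$, so by uniqueness of the irreducible of each dimension it is enough to prove irreducibility. Evaluating at $i$ gives the quaternionic-diagonal matrix $\diag(3i,-i)$; with respect to the complex structure on $\mathbb{H}^2$ defining $\mathfrak{sp}(2)\hookrightarrow\mathfrak{u}(4)$, left multiplication by $ri$ with $r\in\mathbb{R}$ on one copy of $\mathbb{H}$ has eigenvalues $ri$ and $-ri$, so $\phi_3(i)$ has eigenvalue multiset $\{3i,i,-i,-3i\}$. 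This is exactly the weight multiset of $\operatorname{Sym}^3\mathbb{C}^2$, and no nontrivial direct sum of $\mathfrak{su}(2)$-irreducibles has this multiset, since each irreducible summand contributes an unbroken string of weights symmetric about $0$ and $\{3,1,-1,-3\}$ admits no such splitting. Hence $\phi_3$ is irreducible, so it is the unique $4$-dimensional irreducible representation of $\mathfrak{sp}(1)$; and since $Sp(1)=SU(2)$ is simply connected it integrates to the group homomorphism $\phi_3\colon Sp(1)\to Sp(2)$ of the introduction. The only mild subtlety beyond Step 2 is pinning down, in Step 3, the complex structure on $\mathbb{H}^2$ so that the eigenvalues come out as stated.
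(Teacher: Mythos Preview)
Your proposal is sound: Step~1 is immediate, Step~2 reduces correctly to three explicit $2\times 2$ quaternionic commutator checks (which do go through), and your weight argument in Step~3 is valid once the complex structure on $\mathbb{H}^2$ is taken to be right multiplication by $i$, under which left multiplication by $ri$ on a single copy of $\mathbb{H}$ indeed has eigenvalues $\pm ri$.

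That said, there is nothing to compare against: the paper does not prove this proposition at all. It is quoted verbatim as Proposition~4.5 of \cite{DDRW} and used as input, with no argument given. So your direct verification is not an alternative route to the paper's proof but rather a self-contained substitute for an external citation. If anything, your write-up is more informative for a reader who does not have \cite{DDRW} at hand; the only cost is the bookkeeping in Step~2, which you correctly flag as routine but should actually carry out if you intend this to stand on its own.
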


It follows that, for $H_1 = \{\diag(\phi_3(q_1), q_1): q_1 \in Sp(1) \}\subseteq Sp(3)$, $$\mathfrak{h}_1 = \left\{ \begin{bmatrix} 3t_i & \sqrt{3}(t_j + t_k) & \\ \sqrt{3}(t_j + t_k) & 2(t_k - t_j) - t_i & \\ & & t\end{bmatrix} : t\in \operatorname{Im}\mathbb{H}\right\}.$$  Likewise, for $H_2 = \{\diag(\phi_3(q_2),1): q_2\in Sp(1)\}\subseteq G$, we have $$\mathfrak{h}_2 = \left\{ \begin{bmatrix} 3s_i & \sqrt{3}(s_j + s_k) & \\ \sqrt{3}(s_j + s_k) & 2(s_k - s_j) - s_i & \\ & & 0\end{bmatrix} : s\in \operatorname{Im}\mathbb{H}\right\}.$$

\

The metric we will use is constructed in \cite{DDRW} via a combination of Cheeger deformations \cite{Ch1} and Wilking's doubling trick \cite{Wi}.  More specifically, we let $g_0$ denote the bi-invariant metric on $G$ with $g_0(X,Y) = -ReTr(XY)$ for $X,Y\in \mathfrak{g}$.  We let $g_1$ denote the left $G$-invariant, right $K$-invariant metric obtained by Cheeger deforming $g_0$ in the direction of $K$.  That is, $g_1$ is the metric induced on $G$ by declaring the canonical submersion $(G\times K, g_0 + g_0|_K)\rightarrow G$ with $(p,k)\mapsto p k^{-1}$ to be a Riemannian submersion.

We now equip $G\times G$ with the metric $g_1 + g_1$ and consider the isometric action of $G\times H_1\times H_2$ on $G\times G$ given by $(p,h_1,h_2)\ast(p_1, p_2) = (p\, p_1 \, h_1^{-1}, p \, p_2\, h_2^{-1})$.  This actions is free and induces a metric on the orbit space $\Delta G \backslash (G\times G)/ (H_1\times H_2)$.

Following Eschenburg \cite{Es2}, the orbit space $\Delta G\backslash (G\times G)/(H_1\times H_2)$ is canonically diffeomorphic to the biquotient $H_1\backslash G/H_2$, which is called $N_9$ in \cite{DDRW}.  To see this, one verifies the map $G\times G\rightarrow G$ sending $(p_1,p_2)$ to $p_1^{-1} p_2$ descends to a diffeomorphism of the orbit spaces.  We use this diffeomorphism to transport the submersion metric on $\Delta G\backslash (G\times G)/(H_1\times H_2)$ to $H_1\backslash G/ H_2$ and let $g_2$ to denote this metric on $H_1\backslash G/H_2$.

We note that since $g_0$ is bi-invariant, it is non-negatively curved.  It follows from O'Neill's formula \cite{On1} that $g_1$ and $g_2$ are non-negatively curved as well.

\

We now describe the points having $0$-curvature planes in $(H_1\backslash G/H_2, g_2)$.  To do this, we let $$\mathfrak{p} =\left\{\begin{bmatrix} 0 & 0 & z_1 \\ 0 & 0 &z_2 \\ -\overline{z}_1 & -\overline{z}_2 & 0\end{bmatrix}: z_1, z_2\in \mathbb{H}\right\}\subseteq \mathfrak{g}$$ denote the $g_0$-orthogonal complement of $\mathfrak{k}$: $\mathfrak{g} = \mathfrak{k}\oplus \mathfrak{p}$.  Then, for $X\in \mathfrak{g}$ we can write it as $X = X_{\mathfrak{k}} + X_{\mathfrak{p}}$ where $X_{\mathfrak{k}}$ is the projection of $X$ onto $\mathfrak{k}$, and similarly for $X_{\mathfrak{p}}$.  We also let $Ad_p:\mathfrak{g}\rightarrow \mathfrak{g}$ denote the adjoint map $Ad_p(X) = pXp^{-1}$.  Then, as shown in \cite{DDRW} (Corollary 2.8), we have the following description of points $[p^{-1}]\in H_1\backslash G/H_2$ containing $0$ curvature planes.

\begin{theorem}\label{preeqn}

There is a $0$-curvature plane at $[p^{-1}]\in (H_1\backslash G/H_2, g_2)$ iff there are linearly independent vectors $X,Y\in \mathfrak{g}$ satisfying the following equations.

(A)  $g_0(X,Ad_p \mathfrak{h}_1) = g_0(X,\mathfrak{h}_2) = g_0(Y,Ad_p\mathfrak{h}_1) = g_0(Y,\mathfrak{h}_2)=0$

(B)  $[X,Y] = [X_\mathfrak{k}, Y_\mathfrak{k}] = [X_\mathfrak{p}, Y_\mathfrak{p}] = 0$

(C)  $[(Ad_{p^{-1}} X)_\mathfrak{k}, (Ad_{p^{-1}} Y)_\mathfrak{k}] = [(Ad_{p^{-1}} X)_\mathfrak{p}, (Ad_{p^{-1}}Y)_\mathfrak{p}] = 0.$

\end{theorem}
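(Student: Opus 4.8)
\section{Sketch of proof of Theorem \ref{preeqn}}

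The statement to prove is the characterization of $0$-curvature planes, and the plan is to propagate O'Neill's formula up the tower of Riemannian submersions that defines $g_2$, to a stage where the metric is bi-invariant and flat planes are transparent. Recall two facts: for a Riemannian submersion between non-negatively curved manifolds a $2$-plane downstairs has zero curvature iff its horizontal lift $\operatorname{span}(\tilde X,\tilde Y)$ has zero curvature upstairs \emph{and} the O'Neill term $[\tilde X,\tilde Y]^{\mathcal V}$ vanishes; and for a bi-invariant metric $\sec(X,Y)=\tfrac14\|[X,Y]\|^2$, so a flat plane is exactly one spanned by commuting vectors. Now $g_1$ is the base of the Cheeger submersion $(G\times K,\ g_0+g_0|_K)\to(G,g_1)$, $(g,k)\mapsto gk^{-1}$; taking two copies of this, followed by the biquotient submersion of $(G\times G,g_1+g_1)$ by $\Delta G\times H_1\times H_2$ and Eschenburg's diffeomorphism, exhibits $(N_9,g_2)$ as the base of an iterated Riemannian submersion from $(G\times K)\times(G\times K)$ with the bi-invariant metric $(g_0+g_0|_K)+(g_0+g_0|_K)$. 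Tracing a flat plane at $[p^{-1}]$ up this tower yields a plane at a point of $(G\times K)^2$ over $[p^{-1}]$ that is horizontal for the whole composition, flat in the bi-invariant metric -- hence spanned by commuting vectors $U,V$ in $(\mathfrak{g}\oplus\mathfrak{k})^2$ -- and has vanishing O'Neill term at each stage; conversely such a $U,V$ descends to a flat plane at $[p^{-1}]$. The proof consists of unwinding these conditions.

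First I would fix the lift point $((e,e),(p^{-1},e))$ and unwind horizontality; because the metric upstairs is bi-invariant, all horizontality relations are $g_0$-relations. Horizontality for the two Cheeger $K$-fibres pins the $\mathfrak{k}$-component of each $(G\times K)$-vector to be $-\tfrac{1}{\lambda}$ times the $\mathfrak{k}$-part of its $\mathfrak{g}$-component (the explicit Cheeger horizontal lift, $\lambda$ the deformation parameter). Horizontality for $\Delta G$ forces the two $\mathfrak{g}$-components to be $-Ad_{p^{-1}}$-related; this is clean precisely because $Ad$ is a $g_0$-isometry upstairs, whereas inside $(G\times G,g_1+g_1)$ it is not. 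Horizontality for $H_1$ and $H_2$ forces the two $\mathfrak{g}$-components to be $g_0$-orthogonal to $\mathfrak{h}_1$ and $\mathfrak{h}_2$. Pushing the lift down to a representative $X,Y\in\mathfrak{g}$ of the plane at $[p^{-1}]$ and combining the $\Delta G$-relation with the $H_i$-relations, the $H_1$-side becomes $g_0(X,Ad_p\mathfrak{h}_1)=0$ (the conjugation entering through the base point $p^{-1}$) and the $H_2$-side becomes $g_0(X,\mathfrak{h}_2)=0$, and likewise for $Y$: this is (A). I expect pinning down which representative ``$X$'' makes (B) and (C) read as stated -- that is, keeping the Cheeger rescalings and the identification of $T_{[p^{-1}]}N_9$ with the $g_0$-orthogonal complement of $Ad_p\mathfrak{h}_1+\mathfrak{h}_2$ consistent -- to be the fussiest part of the argument.

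For the remaining conditions, $[U,V]=0$ in $(\mathfrak{g}\oplus\mathfrak{k})^2$ is a bracket equation in each of the four summands. Because the horizontal distributions of the two Cheeger submersions are left-invariant, the vanishing-O'Neill-term conditions at those stages are automatic once $[U,V]=0$; at the doubling/biquotient stage one uses the standard computation for biquotients of bi-invariant metrics, following Eschenburg, to see that the O'Neill term there vanishes iff the commutator vanishes in each $G$-factor separately, contributing nothing beyond what follows. Substituting the horizontality relations and using the symmetric-pair identities $[\mathfrak{k},\mathfrak{k}]\subseteq\mathfrak{k}$, $[\mathfrak{p},\mathfrak{p}]\subseteq\mathfrak{k}$, $[\mathfrak{k},\mathfrak{p}]\subseteq\mathfrak{p}$ (valid here since $G/K=\mathbb{H}P^2$ is symmetric), a short computation turns the first $G\times K$ factor's equations into $[X_{\mathfrak{k}},Y_{\mathfrak{k}}]=[X_{\mathfrak{p}},Y_{\mathfrak{p}}]=[X,Y]=0$, i.e.\ (B); the second $G\times K$ factor's equations become the same statement for $Ad_{p^{-1}}X,Ad_{p^{-1}}Y$, i.e.\ (C) (its ``missing'' third bracket $[Ad_{p^{-1}}X,Ad_{p^{-1}}Y]=Ad_{p^{-1}}[X,Y]$ being already forced by (B)). The asymmetry -- one factor producing $X$, the other $Ad_{p^{-1}}X$ -- is exactly the $-Ad_{p^{-1}}$ relation imposed by the $\Delta G$-doubling. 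Assembling (A)--(C), and noting that $X,Y$ are linearly independent iff their lift is, finishes the proof; the principal obstacle is the bookkeeping just described together with checking that the Eschenburg-type step at the doubling stage genuinely adds no constraint.
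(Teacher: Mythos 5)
The paper does not prove Theorem~\ref{preeqn}: it is quoted verbatim from \cite{DDRW} (Corollary~2.8), so there is no in-paper argument to compare against. Your sketch follows the standard route used there and in the related literature (Eschenburg, Tapp, Kerin): realize $(N_9,g_2)$ as the base of a Riemannian submersion from $\bigl((G\times K)^2,\ (g_0+g_0|_K)+(g_0+g_0|_K)\bigr)$, use that a plane in the base is flat iff its horizontal lift is spanned by commuting vectors and all intermediate O'Neill terms vanish, and unwind horizontality into (A) and the bracket conditions into (B) and (C). The outline is sound and correctly locates the origin of each condition, including why the second $G\times K$ factor produces the $Ad_{p^{-1}}$-conjugated brackets of (C). Be aware, though, that the two steps you defer are exactly where the content lies: first, the tangent vectors of the plane at $[p^{-1}]$ are identified with Lie algebra elements $X,Y$ only after composing with the metric automorphism $\Phi$ relating $g_1$ to $g_0$ (i.e.\ $g_1(\cdot,\cdot)=g_0(\Phi\cdot,\cdot)$); without this reparametrization the horizontality in (A) would be a $g_1$-condition rather than the stated $g_0$-condition, and the lifts whose brackets appear in (B) and (C) would not be $X$, $Y$, and $Ad_{p^{-1}}X$, $Ad_{p^{-1}}Y$ on the nose. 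Second, the claim that the A-tensor of the composed submersion contributes nothing beyond (A)--(C) requires Eschenburg's computation for biquotients of bi-invariant metrics and is not automatic from left-invariance alone. Both points are carried out in \cite{DDRW}, so your sketch is a faithful outline of the known proof rather than a complete independent one.
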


It is clear from inspecting these equations that if $\operatorname{span}\{X,Y\} = \operatorname{span}\{X',Y'\}$, then $X$ and $Y$ satisfy all three conditions iff $X'$ and $Y'$ do.

We also note that there is some redundancy in these equations because $(G,K)$ is a symmetric pair.  Specifically, assuming $[X,Y]= 0$, it follows that $[X_\mathfrak{k},Y_\mathfrak{k}] = 0 $ iff $[X_\mathfrak{p}, Y_\mathfrak{p}] = 0$ and also that $[(Ad_{p^{-1}} X)_\mathfrak{k}, (Ad_{p^{-1}} Y)_\mathfrak{k}] = 0$ iff $[(Ad_{p^{-1}} X)_\mathfrak{p}, (Ad_{p^{-1}}Y)_\mathfrak{p}] = 0$.  To see this, we first note that $[\mathfrak{p},\mathfrak{p}]\subseteq \mathfrak{k}$ for a symmetric pair $(G,K)$.  Using the fact that $[\mathfrak{k},\mathfrak{p}]\subseteq \mathfrak{p}$, we see that $[X,Y]_\mathfrak{k} = [X_\mathfrak{k}, Y_\mathfrak{k}] + [X_\mathfrak{p}, Y_\mathfrak{p}]$.  Since condition 2 forces $[X,Y]_{\mathfrak{k}} = 0$, we see that $[X_\mathfrak{k},Y_\mathfrak{k}] = 0$ iff $[X_\mathfrak{p}, Y_\mathfrak{p}] =0$.  To get the result for the vectors $Ad_{p^{-1}} X$ and $Ad_{p^{-1}}Y$, we note that $Ad_{p^{-1}}:\mathfrak{g}\rightarrow \mathfrak{g}$ is Lie algebra isomorphism, so $[X,Y] = 0$ iff $[Ad_{p^{-1}} X,Ad_{p^{-1}} Y] = 0$.

We now show that for many $p\in Sp(3)$, if $X$ and $Y$ satisfy conditions (A) and (B) of Theorem \ref{preeqn}, then we may replace $X$ and $Y$ with $X',Y'$ having a nice form.

\begin{proposition}\label{niceform}  Let $\rho:\mathfrak{g}\rightarrow \mathbb{H}$ with $\rho\left(Z\right) = Z_{33}$, then entry of $Z$ in the last row and last column.  Suppose $[p^{-1}]\in G\bq H$ is a point for which $\rho|_{Ad_p \mathfrak{h_1}}$ is surjective.  If $X,Y\in \mathfrak{g}$ satisfy conditions (A) and (B) of Theorem \ref{preeqn} at the point $[p^{-1}]$, then there are vectors $X',Y'\in \mathfrak{g}$ with $\operatorname{span}\{X,Y\} = \operatorname{span}\{X',Y'\}$ and $X'_{\mathfrak{p}} = Y'_{\mathfrak{sp}(2)} = 0$, where $Y'_{\mathfrak{sp}(2)}$ denotes the projection of $Y'$ to $\mathfrak{sp}(2)\oplus 0 \subseteq \mathfrak{k}\subseteq \mathfrak{g}$.

\end{proposition}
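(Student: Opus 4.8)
The plan is to work throughout with the plane $V := \operatorname{span}\{X,Y\}\subseteq\mathfrak{g}$ and first to reformulate the conclusion. Since $\rho$ vanishes on $\mathfrak{p}$ and on $\mathfrak{sp}(2)\oplus 0$, while $\rho$ restricts to a linear isomorphism from $0\oplus\mathfrak{sp}(1)$ onto $\operatorname{Im}\mathbb{H}$, the conclusion ``$X'_{\mathfrak{p}}=Y'_{\mathfrak{sp}(2)}=0$'' is equivalent to asking for a basis $\{X',Y'\}$ of $V$ with $X'\in\mathfrak{k}$ and $Y'\in\mathfrak{p}\oplus(0\oplus\mathfrak{sp}(1))$. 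Now $\mathfrak{k}$ and $\mathfrak{p}\oplus(0\oplus\mathfrak{sp}(1))$ together span $\mathfrak{g}$ and intersect exactly in $0\oplus\mathfrak{sp}(1)$, so such a basis exists if and only if $V\cap\mathfrak{k}$ and $V\cap(\mathfrak{p}\oplus(0\oplus\mathfrak{sp}(1)))$ are both nonzero and together span $V$. The proof thus reduces to verifying these three conditions.

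First I would show $V\cap\mathfrak{k}\neq\{0\}$. The symmetric space $G/K = Sp(3)/(Sp(2)\times Sp(1))$ is $\mathbb{H}P^2$, which has rank one; hence any two linearly independent elements of $\mathfrak{p}$ have nonzero bracket (this can also be checked directly from the formula for $[X_\mathfrak{p},Y_\mathfrak{p}]$ in the coordinates defining $\mathfrak{p}$). Since condition (B) gives $[X_\mathfrak{p},Y_\mathfrak{p}]=0$, the projections $X_\mathfrak{p}$ and $Y_\mathfrak{p}$ must be linearly dependent, so $\pi_\mathfrak{p}(V)$ is at most one--dimensional and $\dim(V\cap\mathfrak{k})\geq 1$.

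The heart of the argument is to show $V\cap(\mathfrak{p}\oplus(0\oplus\mathfrak{sp}(1)))\neq\{0\}$, equivalently that $X_{\mathfrak{sp}(2)}$ and $Y_{\mathfrak{sp}(2)}$ are linearly dependent. Choose a basis $\{X',Z\}$ of $V$ with $X'$ spanning a line in $V\cap\mathfrak{k}$. When $Z_\mathfrak{p}\neq 0$, using that $(G,K)$ is symmetric (so $[\mathfrak{p},\mathfrak{p}]\subseteq\mathfrak{k}$ and $[\mathfrak{k},\mathfrak{p}]\subseteq\mathfrak{p}$) together with condition (B), one finds that $X'$ commutes with $Z_\mathfrak{p}$; in particular $[X'_{\mathfrak{sp}(2)},Z_{\mathfrak{sp}(2)}]=0$, $X'$ lies in the isotropy subalgebra of the nonzero vector $Z_\mathfrak{p}\in\mathfrak{p}\cong\mathbb{H}^2$, and (using condition (A) and the surjectivity of $\rho|_{Ad_p\mathfrak{h}_1}$) $X'_{\mathfrak{sp}(2)}\neq 0$ with $\rho(X')$ determined by $X'_{\mathfrak{sp}(2)}$ and the plane $Ad_p\mathfrak{h}_1$. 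When $Z_\mathfrak{p}=0$ we have $V\subseteq\mathfrak{k}$ and condition (B) gives $[X_{\mathfrak{sp}(2)},Y_{\mathfrak{sp}(2)}]=0$ directly. In either case condition (A) (the $\mathfrak{h}_2$--part) forces the $\mathfrak{sp}(2)$--components of $X$ and $Y$ to be $g_0$--orthogonal to $\phi_3(\operatorname{Im}\mathbb{H})$. Substituting the explicit matrix descriptions of $\phi_3$, $\mathfrak{h}_1$ and $\mathfrak{h}_2$ into all of these relations, one concludes that $X_{\mathfrak{sp}(2)}$ and $Y_{\mathfrak{sp}(2)}$ are proportional. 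I expect this step to be the main obstacle: concretely, one must rule out the existence of a pair of commuting, linearly independent elements of $\mathfrak{sp}(2)$, each orthogonal to the principal subalgebra $\phi_3(\operatorname{Im}\mathbb{H})$ and subject to the constraints imposed by $Ad_p\mathfrak{h}_1$ — and this is where the surjectivity hypothesis enters in an essential way.

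Finally I would assemble the pieces. With $V\cap\mathfrak{k}\neq\{0\}$ and $V\cap(\mathfrak{p}\oplus(0\oplus\mathfrak{sp}(1)))\neq\{0\}$ established, it remains only to exclude the possibility that these two intersections coincide in a single line $L$, which would then lie in $0\oplus\mathfrak{sp}(1)$. If $Y_0$ spans such an $L$ and $Z\in V\setminus L$, then $[Y_0,Z]=0$ because the conditions are invariant under change of basis of $V$; writing $Y_0=\diag(0,0,y)$ with $y$ a nonzero, hence invertible, quaternion, the bracket $[\diag(0,0,y),Z_\mathfrak{p}]$ is seen at once to force $Z_\mathfrak{p}=0$, so $V\subseteq\mathfrak{k}$, contradicting $\dim(V\cap\mathfrak{k})=1$. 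Hence the two intersections span $V$, and choosing $X'$ and $Y'$ to generate distinct lines in $V\cap\mathfrak{k}$ and in $V\cap(\mathfrak{p}\oplus(0\oplus\mathfrak{sp}(1)))$ respectively yields vectors with $X'_\mathfrak{p}=Y'_{\mathfrak{sp}(2)}=0$ and $\operatorname{span}\{X',Y'\}=\operatorname{span}\{X,Y\}$, as required.
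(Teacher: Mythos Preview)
Your first step---using that $G/K=\mathbb{H}P^2$ has rank one to force $X_\mathfrak{p}$ and $Y_\mathfrak{p}$ to be dependent, hence producing a nonzero element of $V\cap\mathfrak{k}$---matches the paper exactly. The gap is in your second step, which you rightly flag as ``the main obstacle'': showing that two commuting elements of $\mathfrak{sp}(2)$, each $g_0$-orthogonal to $\phi_3(\operatorname{Im}\mathbb{H})$, must be linearly dependent. You propose to grind this out by explicit matrix computation, bringing in the constraints from $Ad_p\mathfrak{h}_1$, but the paper dispatches it in one line with no $p$-dependence whatsoever: the $g_0$-orthogonal complement of $\phi_3(\operatorname{Im}\mathbb{H})$ in $\mathfrak{sp}(2)$ is exactly the tangent space at the identity coset of the Berger space $Sp(2)/\phi_3(Sp(1))$, and this space is known to admit a \emph{normal homogeneous} metric of positive sectional curvature. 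Positive curvature means there is no $2$-plane of commuting tangent vectors, so $[X_{\mathfrak{sp}(2)},Y_{\mathfrak{sp}(2)}]=0$ forces dependence immediately.

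This also pins down where the surjectivity hypothesis on $\rho|_{Ad_p\mathfrak{h}_1}$ actually enters---and it is \emph{not} in the dependence claim, contrary to what you suggest. The paper uses it only once, after having arranged $X_\mathfrak{p}=0$: if in addition $X_{\mathfrak{sp}(2)}=0$, then $X=\diag(0,0,X_{33})$, and surjectivity together with $g_0(X,Ad_p\mathfrak{h}_1)=0$ forces $X_{33}=0$, so $X=0$, contradicting linear independence. Hence $X_{\mathfrak{sp}(2)}\neq 0$, and one simply subtracts a suitable multiple of $X$ from $Y$ to clear $Y_{\mathfrak{sp}(2)}$. Your reformulation in terms of the intersections $V\cap\mathfrak{k}$ and $V\cap\bigl(\mathfrak{p}\oplus(0\oplus\mathfrak{sp}(1))\bigr)$ is valid, and your assembly argument in the final paragraph is correct, but once the Berger-space fact is in hand the paper's direct ``subtract a multiple'' route is considerably shorter.
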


\begin{proof}

We start with the equation $[X_{\mathfrak{p}}, Y_{\mathfrak{p}}] = 0$ from condition $(B)$.  Since we can identify $\mathfrak{p}$ with $T_{[eK]} G/K$, where $G/K = \mathbb{H}P^2$ has positive sectional curvature, it follows that $[X_{\mathfrak{p}}, Y_{\mathfrak{p}}] = 0$ iff $X_\mathfrak{p}$ and $Y_\mathfrak{p}$ are dependent.  Thus, either $X_\mathfrak{p} = 0$ and $X = X'$ or $X_\mathfrak{p} = \lambda Y_\mathfrak{p}$ for some real number $\lambda$.  Then $X' = \lambda X - Y$ has no $\mathfrak{p}$ part.  We may thus assume without loss of generality that $X$ has no $\mathfrak{p}$ part.

Since $Sp(2)\times\{I\}$ is an ideal in $K = Sp(2)\times Sp(1)$, the condition $[X_\mathfrak{k}, Y_\mathfrak{k}] = 0$ implies $[X_{\mathfrak{sp}(2)}, Y_{\mathfrak{sp}(2)}] = 0.$  By condition $(A)$, we know $g_0(X, \mathfrak{h}_2) = g_0(Y,\mathfrak{h_2}) = 0$, so we may interpret $X_{\mathfrak{sp}(2)}$ and $Y_{\mathfrak{sp}(2)}$ as tangent vectors on $Sp(2)/\phi_3(Sp(1))$.  But, $Sp(2)/\phi_3(Sp(1))$ is the Berger space \cite{Ber} and is known to admit a normal homogeneous metric of positive curvature.  So we see that $[X_{\mathfrak{sp}(2)}, Y_{\mathfrak{sp}(2)}] = 0$ iff $X_{\mathfrak{sp}(2)}$ and $Y_{\mathfrak{sp}(2)}$ are linearly dependent.  

If $X_{\mathfrak{sp}(2)} = 0$, then the only non-vanishing entry of $X$ is $X_{33}$.  Since, by assumption, $\rho|_{Ad_p \mathfrak{h_1}}$ is surjective, the condition $g_0(X, Ad_p \mathfrak{h_1}) = 0$ forces $X = 0$, contradicting the fact that $\{X,Y\}$ is linearly independent.  Hence, we may assume $X_{\mathfrak{sp}(2)} \neq 0$.  Then, we may subtract an appropriate multiple of $X$ from $Y$ to obtain a new vector $Y'$ with $Y'_{\mathfrak{sp}(2)} = 0$.

\end{proof}

We now work out conditions (A), (B), and (C) of Theorem \ref{preeqn} more explicitly at a point of the form $p = \begin{bmatrix} \cos\theta & 0& \sin\theta \\0 & 1 & 0\\ -\sin\theta & 0& \cos\theta\end{bmatrix}$.  We will always assume $\theta \in \left(0,\pi/4\right)$.  Also, we will often identify $\mathfrak{p}$, consisting of matrices of the form $\begin{bmatrix} 0 & 0 & z_1\\ 0 & 0 & z_2\\ -\overline{z}_1 & -\overline{z}_2 & 0\end{bmatrix}$, with $\mathbb{H}^2$ via the canonical $\mathbb{R}$-linear isomorphism mapping such a matrix to $\begin{bmatrix} z_1 \\ z_2\end{bmatrix}$.

We note that for points of this form, $\rho|_{Ad_p \mathfrak{h_1}}$ has image consisting of all elements of $\operatorname{Im}\mathbb{H}$ of the form $3\sin^2\theta t_i + \cos^2\theta t$ for $t = t_i + t_j + t_k\in\operatorname{Im}\mathbb{H}$.  Since $\cos^2\theta \neq 0$ because $\theta\in(0,\pi/4)$, this map has no kernel, so is surjective.  In particular, the conditions of Proposition \ref{niceform} are verified at all such $p$, and thus, we may assume $X = \begin{bmatrix} x_1 & x_2 & 0 \\ -\overline{x}_2 & x_3 & 0 \\0 & 0& x_4\end{bmatrix}$ with $x_1,x_3, x_3\in \operatorname{Im}\mathbb{H}$ and $x_2\in \mathbb{H}$.  Similarly, we may assume $Y = \begin{bmatrix} 0 & 0 & y_1\\ 0 & 0 & y_2 \\ -\overline{y}_1 & -\overline{y}_2 & y_3\end{bmatrix}$ with $y_1,y_2\in \mathbb{H}$ and $y_3\in \operatorname{Im}\mathbb{H}$

\begin{lemma}\label{equations}  For a point $p$ of the above form and $X,Y\in \mathfrak{g}$, conditions (A), (B), and (C) of Theorem \ref{preeqn} are equivalent to the following list of conditions.

\begin{equation}\label{eqn1} x_1 y_1 + x_2 y_2 - y_1 x_4 = 0 \tag{1}\end{equation}

\begin{equation}\label{eqn2} -\overline{x_2}y_1 + x_3 y_2 - y_2 x_4 = 0 \tag{2} \end{equation}

\begin{equation} \label{eqn3} \{x_4,y_3\} \text{ is linearly dependent over } \mathbb{R} \tag{3}\end{equation}

For $$v = \begin{bmatrix} \cos\theta\sin\theta(x_1 - x_4)\\ -\sin\theta\overline{x_2}\end{bmatrix} \in\mathbb{H}^2\cong \mathfrak{p}$$ and $$w = \begin{bmatrix} \operatorname{Re}(y_1) + (\cos^2\theta - \sin^2\theta) \operatorname{Im}(y_1)- \sin\theta\cos\theta\, y_3\\ \cos\theta y_2\end{bmatrix}\in \mathbb{H}^2\cong \mathfrak{p},$$ \begin{equation}\label{eqn4} \text{the set }\{v,w\} \text{ is linearly dependent over } \mathbb{R}  \tag{4}\end{equation}

\begin{equation} \label{eqn5i} 3(x_1)_i - (x_3)_i = 0 \tag{5i}\end{equation}

\begin{equation} \label{eqn5j} \sqrt{3}(x_2)_j - (x_3)_j = 0 \tag{5j}\end{equation}

\begin{equation} \label{eqn5k} \sqrt{3}(x_2)_k + (x_3)_k = 0 \tag{5k}\end{equation}

\begin{equation} \label{eqn6i} (x_1)_i (-2\sin^2\theta ) + (x_4)_i(1+2\sin^2\theta ) = 0\tag{6i} \end{equation}

\begin{equation} \label{eqn6j} (x_2)_j (\cos\theta-1)2\sqrt{3} + (x_1)_j \sin^2\theta + (x_4)_j \cos^2\theta = 0 \tag{6j} \end{equation}

\begin{equation} \label{eqn6k} (x_2)_k (\cos\theta-1)2\sqrt{3} +  (x_1)_k \sin^2\theta +(x_4)_k \cos^2\theta = 0 \tag{6k} \end{equation}

\begin{equation} \label{eqn7i} -4\sin\theta\cos\theta (y_1)_i +(2\sin^2\theta + 1)(y_3)_i = 0 \tag{7i} \end{equation}

\begin{equation} \label{eqn7j} 2\sin\theta\cos\theta(y_1)_j -2\sqrt{3}\sin\theta (y_2)_j + \cos^2\theta(y_3)_j = 0 \tag{7j} \end{equation}

\begin{equation} \label{eqn7k} 2\sin\theta\cos\theta(y_1)_k -2\sqrt{3}\sin\theta (y_2)_k + \cos^2\theta(y_3)_k = 0 \tag{7k} \end{equation}

\end{lemma}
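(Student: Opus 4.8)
The plan is to translate conditions (A), (B), (C) of Theorem~\ref{preeqn} one at a time into polynomial relations among the entries, using that $X$ and $Y$ are already in the normal form fixed above: $X\in\mathfrak{k}$ with $\mathfrak{sp}(1)$-part $x_4$ and $\mathfrak{sp}(2)$-part $\begin{bmatrix} x_1 & x_2 \\ -\overline{x}_2 & x_3\end{bmatrix}$, and $Y$ with vanishing $\mathfrak{sp}(2)$-part, $\mathfrak{sp}(1)$-part $y_3$, and $\mathfrak{p}$-part corresponding to $\begin{bmatrix} y_1 \\ y_2\end{bmatrix}\in\mathbb{H}^2$. Throughout I would use $g_0(A,B)=-\Re\operatorname{Tr}(AB)$, the $Ad$-invariance of $g_0$, and the explicit matrix descriptions of $\mathfrak{h}_1$ and $\mathfrak{h}_2$ from Section~\ref{background}; each translation will be an equivalence, so the equivalence in the lemma follows by conjoining them.

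For condition~(B): because $X_\mathfrak{p}=0$, the equation $[X_\mathfrak{p},Y_\mathfrak{p}]=0$ is automatic. Writing $X$ and $Y$ in $2+1$ block form, a direct computation shows $[X,Y]$ is block off-diagonal with $(3,3)$-entry $[x_4,y_3]$ and upper-right $2\times1$ block $\begin{bmatrix} x_1 & x_2 \\ -\overline{x}_2 & x_3\end{bmatrix}\begin{bmatrix} y_1 \\ y_2\end{bmatrix}-\begin{bmatrix} y_1 \\ y_2\end{bmatrix}x_4$, the lower-left block being forced by skew-Hermiticity; so $[X,Y]=0$ is equivalent to that block vanishing row by row — equations~\eqref{eqn1} and~\eqref{eqn2} — together with $[x_4,y_3]=0$. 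Since $[X_\mathfrak{k},Y_\mathfrak{k}]=\diag(0,[x_4,y_3])$ as well, and since $x_4,y_3\in\operatorname{Im}\mathbb{H}$ commute iff $\{x_4,y_3\}$ is $\mathbb{R}$-dependent, condition~(B) is equivalent to \eqref{eqn1}, \eqref{eqn2}, \eqref{eqn3}.

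For condition~(C): once (B) holds we have $[X,Y]=0$, hence $[Ad_{p^{-1}}X,Ad_{p^{-1}}Y]=0$, so by the symmetric-pair remark following Theorem~\ref{preeqn} condition~(C) reduces to the single equation $[(Ad_{p^{-1}}X)_\mathfrak{p},(Ad_{p^{-1}}Y)_\mathfrak{p}]=0$. Computing $Ad_{p^{-1}}X=p^{-1}Xp$ and $Ad_{p^{-1}}Y=p^{-1}Yp$ for the given $p$ and reading off their last columns recovers exactly the vectors $v$ and $w$ of the statement under the identification $\mathfrak{p}\cong\mathbb{H}^2$. Since $\mathfrak{p}$ is identified with $T_{[eK]}(G/K)$ and $G/K=\mathbb{H}P^2$ is positively curved, two elements of $\mathfrak{p}$ have zero bracket iff they are $\mathbb{R}$-dependent — the argument already used in Proposition~\ref{niceform} — so condition~(C) is equivalent to~\eqref{eqn4}.

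For condition~(A): the relation $g_0(Y,\mathfrak{h}_2)=0$ holds automatically, since $\mathfrak{h}_2$ is supported in the upper-left $2\times2$ block while $Y_{\mathfrak{sp}(2)}=0$. Expanding $g_0(X,\mathfrak{h}_2)=0$ with the explicit $\mathfrak{h}_2$ and collecting the coefficients of the parameters $s_i,s_j,s_k$ gives \eqref{eqn5i}, \eqref{eqn5j}, \eqref{eqn5k}. Using $Ad$-invariance to rewrite $g_0(X,Ad_p\mathfrak{h}_1)=0$ as $g_0(Ad_{p^{-1}}X,\mathfrak{h}_1)=0$ and likewise for $Y$, substituting the matrices $Ad_{p^{-1}}X$, $Ad_{p^{-1}}Y$ computed above and the explicit $\mathfrak{h}_1$, and collecting coefficients of $t_i,t_j,t_k$: the $Y$-equation gives \eqref{eqn7i}, \eqref{eqn7j}, \eqref{eqn7k} directly, and the $X$-equation gives three relations which become \eqref{eqn6i}, \eqref{eqn6j}, \eqref{eqn6k} after using \eqref{eqn5i}--\eqref{eqn5k} to eliminate $x_3$ (replacing one system by an equivalent one inside the full list does not change its solutions). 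Thus (A) is equivalent to \eqref{eqn5i}--\eqref{eqn7k}, and conjoining the three parts gives the lemma. I expect no conceptual difficulty: the only ideas are the three reductions of bracket conditions to linear dependence via the positive curvature of $\mathbb{H}P^2$ and the symmetric-pair redundancy, with everything else being careful bookkeeping of the units $\mathbf{i},\mathbf{j},\mathbf{k}$ and the scalars $\sqrt3$, $\sin\theta$, $\cos\theta$ through the trace expansions — in particular remembering that \eqref{eqn5i}--\eqref{eqn5k} must be used to bring the $X$-versus-$Ad_p\mathfrak{h}_1$ equations into the displayed form.
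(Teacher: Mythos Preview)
Your proposal is correct and follows essentially the same route as the paper: the handling of (B) and (C) is identical (block computation of $[X,Y]$, symmetric-pair reduction, and the $\mathbb{H}P^2$ positive-curvature argument for $\mathfrak{p}$), and for (A) the only cosmetic difference is that you pass $Ad_p$ across $g_0$ and reuse the already-computed $Ad_{p^{-1}}X,\,Ad_{p^{-1}}Y$, whereas the paper instead writes out $Ad_p\mathfrak{h}_1$ explicitly and pairs it with $X$ and $Y$. Either way the same three $t=i,j,k$ substitutions produce \eqref{eqn6i}--\eqref{eqn7k} after eliminating $x_3$ via \eqref{eqn5i}--\eqref{eqn5k}.
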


\begin{proof}

We first claim that condition (A) is equivalent to equations \eqref{eqn5i} through \eqref{eqn7k}.  To begin with, we note that since $Y_{\mathfrak{sp}(2)} = 0$ and $\mathfrak{h}_2\subseteq \mathfrak{sp}(2)\oplus 0\subseteq \mathfrak{k}$, the equation $g_0(Y,\mathfrak{h}_2) = 0$ is automatically satisfied.

Now, a calculation shows that for $s = s_i + s_j + s_k\in \operatorname{Im}\mathbb{H}$, $$0=g_0(X,\mathfrak{h}_2) = 3s_i x_1 +2\sqrt{3}(s_j+s_k) \operatorname{Im}(x_2) + (2(s_k-s_j) - s_i) x_3.$$  Then, using each of $s = i$, $s=j$, and $s=k$ gives equations \eqref{eqn5i}, \eqref{eqn5j}, \eqref{eqn5k} which, using linearity, are therefore equivalent to the condition that $g_0(X,\mathfrak{h}_2) = 0$.  

Further with $t = t_i + t_j + t_k \in \operatorname{Im}\mathbb{H}$ we compute $$Ad_p \mathfrak{h}_1 = \left\{\begin{bmatrix} 3\cos^2\theta t_i + \sin^2\theta t &  \sqrt{3}\cos\theta(t_j + t_k) & \cos\theta \sin\theta (t - 3t_i)\\ \sqrt{3}\cos\theta (t_j + t_k) & 2(t_k - t_j) - t_i & -\sqrt{3}\sin\theta(t_j + t_k)\\ \cos\theta \sin\theta (t - 3t_i) & -\sqrt{3}\sin\theta(t_j + t_k) & 3\sin^2\theta t_i + \cos^2\theta t  \end{bmatrix} \right\}.$$  A calculation now shows that the expression $g_0(X,Ad_p \mathfrak{h}_1)$ is given by the expression \begin{align*} &(3\cos^2 \theta t_i + \sin^2\theta t) x_1 +2\sqrt{3} \cos\theta (t_j + t_k) \operatorname{Im}(x_2)\\ & + (2(t_k-t_j) - t_i)x_3 + (3\sin^2\theta t_i + \cos^2\theta t)x_4 .\end{align*} Substituting each of $t =i$, $t=j$, and $t=k$ and using \eqref{eqn5i}, \eqref{eqn5j}, and \eqref{eqn5k} to eliminate $x_3$ gives, after using $\sin^2\theta + \cos^2\theta = 1$, \eqref{eqn6i}, \eqref{eqn6j}, \eqref{eqn6k}.

Likewise, the equation $g_0(Y, Ad_p \mathfrak{h}_1) = 0$ is equivalent to the vanishing of the expression $$2\cos\theta\sin\theta(-3t_i + t)\operatorname{Im}(y_1) -2\sqrt{3}\sin\theta (t_j+t_k)\operatorname{Im}(y_2) + (3\sin^2\theta t_i + \cos^2\theta t) y_3 .$$  Substituting each of $t =i$, $t=j$, and $t=k$ gives equations \eqref{eqn7i}, \eqref{eqn7j}, and \eqref{eqn7k}.

\

We next claim that equations \eqref{eqn1}, \eqref{eqn2}, and condition \eqref{eqn3} are equivalent to condition (B) of Theorem \ref{preeqn}.  Computing, we see $ [X,Y] = 0$ iff equations \eqref{eqn1} and \eqref{eqn2} are satisfied and $[x_4,y_3] = 0$.  But this latter condition is equivalent to \eqref{eqn3} since $Sp(1) = S^3$ has positive sectional curvature.  Further $X_{\mathfrak{p}} = 0$ so $[X_\mathfrak{p}, Y_\mathfrak{p}] = 0$ and since $Y_{\mathfrak{sp}(2)} = 0$, $[X_{\mathfrak{k}},Y_\mathfrak{k}] = 0$ iff condition \eqref{eqn3} is satisfied.

Lastly, we claim that condition \eqref{eqn4} is equivalent to condition (C) of Theorem \ref{preeqn}.  To see this, first recall that it was shown directly following Theorem \ref{preeqn} that the conditions $[(Ad_{p^{-1}} X)_\mathfrak{k}, (Ad_{p^{-1}} Y)_\mathfrak{k}] = 0$ and $ [(Ad_{p^{-1}} X)_\mathfrak{p}, (Ad_{p^{-1}}Y)_\mathfrak{p}] = 0$ are equivalent, so we may focus on only one of these.

A direct calculation shows that $v = (Ad_{p^{-1}} X)_\mathfrak{p}$ and $w = (Ad_{p^{-1}}Y)_\mathfrak{p}$, so we need only argue that $[v,w] = 0$ iff $v$ and $w$ are dependent over $\mathbb{R}$.  But we may interpret $v,w$ as elements of $T_{[eK]}G/K$ where $G/K = \mathbb{H}P^2$ has a normal bi-invariant metric of positive sectional curvature.  It follows that the bracket of $v$ and $w$ vanishes iff $v$ and $w$ are linearly dependent.
\end{proof}

\section{Quasi-positive curvature}

In this section, we prove $N_9 = H_1\backslash Sp(3)/H_2$ is quasi-positively curved with the metric $g_2$ constructed in Section \ref{background}.  As mentioned above, the metric $g_2$ is non-negatively curved, so it is sufficient to find a single point for which all $2$-planes have non-zero curvature.  In fact, we will show the following theorem.

\begin{theorem}\label{precise} With respect to the metric $g_2$, $N_9$ is positively curved at points of the form $[p^{-1}]\in H_1\backslash G/H_2\cong N_9$, where $p = \begin{bmatrix} \cos\theta & & -\sin\theta\\ 0 & 1 & 0 \\ \sin\theta & 0 & \cos\theta\end{bmatrix}$ with $\theta \in(0,\pi/6)$.

\end{theorem}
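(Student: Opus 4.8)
The plan is to show that for $p$ of the stated form with $\theta\in(0,\pi/6)$, the system of equations \eqref{eqn1}--\eqref{eqn7k} from Lemma \ref{equations} admits only the trivial solution $X=Y=0$ (equivalently, forces $\{X,Y\}$ to be linearly dependent), so that by Theorem \ref{preeqn} there is no zero-curvature plane at $[p^{-1}]$, and then invoke non-negative curvature of $g_2$ to conclude positivity. Note the point in Theorem \ref{precise} differs from the one in Lemma \ref{equations} by a sign in the $\sin\theta$ entries; I would first remark that conjugating $p$ by $\diag(1,1,-1)\in Sp(3)$ (or replacing $\theta$ by $-\theta$, which only flips signs of $\sin\theta$ throughout) converts between the two forms, so Lemma \ref{equations} applies verbatim after the obvious sign bookkeeping. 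Thus it suffices to analyze the given system.

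The core of the argument is linear algebra in the component equations, organized around the three imaginary units. First I would handle the $i$-components: equations \eqref{eqn5i}, \eqref{eqn6i}, \eqref{eqn7i} give $(x_3)_i=3(x_1)_i$, a relation between $(x_1)_i$ and $(x_4)_i$, and a relation between $(y_1)_i$ and $(y_3)_i$. Since the coefficient $1+2\sin^2\theta$ in \eqref{eqn6i} never vanishes, \eqref{eqn6i} forces $(x_1)_i$ and $(x_4)_i$ to be proportional with a ratio depending on $\theta$; combined with the linear-dependence condition \eqref{eqn3} on $\{x_4,y_3\}$ and \eqref{eqn7i}, I expect to pin down the imaginary parts of $x_4$ and $y_3$ up to a common scalar, or force them to vanish outright. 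Next the $j$- and $k$-components: \eqref{eqn5j}, \eqref{eqn5k} express $(x_3)_j,(x_3)_k$ via $(x_2)_j,(x_2)_k$, and then \eqref{eqn6j}, \eqref{eqn6k} become relations among $(x_1)_\bullet,(x_2)_\bullet,(x_4)_\bullet$ with $\theta$-dependent coefficients; similarly \eqref{eqn7j}, \eqref{eqn7k} tie $(y_1)_\bullet,(y_2)_\bullet,(y_3)_\bullet$ together. The real parts of $x_1,x_3,x_4$ (note $x_1,x_3,x_4$ are purely imaginary, so there are none to worry about) and the real part of $x_2$ are essentially unconstrained by (A) and must be controlled using (B): equations \eqref{eqn1} and \eqref{eqn2} are quaternionic, hence eight real equations, relating the $x$'s, the $y$'s, and $x_4$.

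The remaining work is to feed all of this into conditions \eqref{eqn3} and \eqref{eqn4}. Condition \eqref{eqn4} says the explicit vectors $v,w\in\mathbb{H}^2$ are $\mathbb{R}$-proportional; writing this out gives $v=\lambda w$ for some $\lambda\in\mathbb{R}$ (or $v=0$ or $w=0$), which couples $\cos\theta\sin\theta(x_1-x_4)$ and $\sin\theta\,\overline{x_2}$ to the $y$-data already constrained by \eqref{eqn7i}--\eqref{eqn7k}. My strategy is: first show $v$ and $w$ cannot both be nonzero unless a specific $\theta$-dependent determinant vanishes, and check that this determinant is nonzero precisely on $(0,\pi/6)$ — this is where the bound $\pi/6$ (rather than $\pi/4$) should enter, presumably because some polynomial in $\cos\theta,\sin\theta$ changes sign at $\theta=\pi/6$, e.g. something like $1-2\sin^2\theta$ or $\cos 2\theta$ vs. a competing term. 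Once $v=0$ or $w=0$ is forced, one gets $x_1=x_4$ and $x_2=0$ (from $v=0$), which combined with \eqref{eqn5i}--\eqref{eqn5k} kills $x_3$ and then \eqref{eqn1}, \eqref{eqn2} force the $y_i$ to vanish; alternatively $w=0$ over-determines the $y$'s against \eqref{eqn7i}--\eqref{eqn7k}. Either way we are driven to $X$ a multiple of the identity-type element — but $\mathfrak{g}$ is traceless so in fact $X=0$ — contradicting independence.

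The main obstacle I anticipate is the bookkeeping in reconciling conditions \eqref{eqn3} and \eqref{eqn4} simultaneously: each individually leaves a one-parameter family, and it is their intersection, cut out by the quaternionic equations \eqref{eqn1}--\eqref{eqn2}, that must be shown empty. Concretely, the delicate point is verifying that the relevant $\theta$-dependent coefficient(s) — the one(s) forcing the contradiction — are nonzero exactly on $(0,\pi/6)$, and understanding why the argument degrades at $\theta=\pi/6$. I would therefore structure the proof by case analysis on which of the vectors $\{x_4,y_3\}$ and $\{v,w\}$ vanish, reducing in the generic case to a small homogeneous linear system whose coefficient determinant is an explicit trigonometric polynomial, and then finish with a short monotonicity or sign check of that polynomial on $(0,\pi/6)$. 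Routine but lengthy matrix computations (the forms of $Ad_p\mathfrak{h}_1$, $Ad_{p^{-1}}X$, etc.) are already supplied by Lemma \ref{equations}, so the new content is purely this finite-dimensional elimination.
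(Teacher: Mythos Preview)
Your overall framing is right, but the central strategic step is backwards. You propose to show that condition \eqref{eqn4} forces $v=0$ or $w=0$, and then derive $X=0$ or $Y=0$ from that. In fact the opposite holds: the paper first proves that $v=0$ would imply $X=0$ and $w=0$ would imply $Y=0$ (each a contradiction with independence), so both $v$ and $w$ are \emph{nonzero}. Condition \eqref{eqn4} then lets one rescale $X$ so that $v=w$, yielding the two new relations
\[
\cos\theta\sin\theta\,(x_1-x_4)=(\cos^2\theta-\sin^2\theta)\,y_1-\sin\theta\cos\theta\,y_3,\qquad y_2=-\tan\theta\,\overline{x_2},
\]
and it is these relations, not the vanishing of $v$ or $w$, that drive the rest of the proof. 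Your plan to ``force $v=0$ or $w=0$'' would simply stall: there is no $\theta$-determinant whose nonvanishing on $(0,\pi/6)$ excludes the case $v=\lambda w$ with both nonzero.

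The genuine work you are missing lies after $v=w$. One must show, using the nonlinear quaternionic equations \eqref{eqn1}--\eqref{eqn2} together with the linear constraints, that $x_2,y_1,y_2\neq 0$, that $x_1\neq x_4$, and then that $\{x_1,x_4,y_1,y_3\}$ spans a single real line in $\operatorname{Im}\mathbb{H}$ (this requires a separate argument ruling out the exceptional value $\lambda=2$ when $y_3=\lambda x_4$). A further step shows all $i$-components vanish and that the $jk$-parts of $x_2,y_2$ also lie in that same line; only then do the variables commute and the system becomes genuinely linear. Solving it explicitly, the contradiction is a \emph{sign} contradiction: equation \eqref{eqn1} forces $y_1(x_1-x_4)=\tan\theta\,|x_2|^2>0$, while the explicit solution gives $y_1(x_1-x_4)<0$ on $(0,\pi/6)$. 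The threshold $\pi/6$ enters not through $\cos 2\theta$ but through expressions like $1-4\sin^2\theta$ and $3\sin^2\theta-\cos^2\theta$, which change sign exactly there. Your sketch treats the elimination as ``routine'' linear algebra, but the reduction to a commuting (hence linear) problem is itself the substance of the argument and cannot be bypassed.
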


We will always work with points $p$ of the above form.

Assume $[p^{-1}]\in H_1\backslash G/ H_2$ is a point for which there is a $0$-curvature plane.  Then, using Theorem \ref{preeqn} and Proposition \ref{niceform}, it follows that there are linearly independent $X,Y,\in\mathfrak{g} = \mathfrak{sp}(3)$ with $X = \begin{bmatrix} x_1 & x_2 & 0 \\ -\overline{x}_2 & x_3 & 0\\ 0 & 0& x_4\end{bmatrix}$ and $Y = \begin{bmatrix} 0 & 0 & y_1\\ 0 & 0 & y_2\\ -\overline{y}_1 & -\overline{y_2} & y_3\end{bmatrix}$ and which satisfy all of the conditions given by Lemma \ref{equations}.  By repeatedly applying the conditions of Lemma \ref{equations}, we will constrain the forms of $X$ and $Y$ until we finally find that no such $X$ and $Y$ exist.  This contradiction will establish that there are no zero curvature planes at $[p]^{-1}$, and hence, that $N_9$ is positively curved at these points.

\begin{proposition}

If $\theta\in(0, \pi/6)$, the two vectors $$v = \begin{bmatrix} \cos\theta\sin\theta (x_1 - x_4)\\ -\sin\theta\overline{x_2}\end{bmatrix} $$ and $$ w = \begin{bmatrix} \operatorname{Re}(y_1) + (\cos^2\theta - \sin^2\theta) \operatorname{Im}(y_1)- \sin\theta\cos\theta\, y_3\\ \cos\theta y_2\end{bmatrix} $$ are both non-zero.
\end{proposition}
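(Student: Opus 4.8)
The plan is to argue by contradiction: suppose one of $v, w$ vanishes, and derive from Lemma \ref{equations} that $X$ and $Y$ must be linearly dependent, contradicting our standing assumption.

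First I would handle the case $v = 0$. Vanishing of the second coordinate gives $\sin\theta\,\overline{x_2} = 0$, hence $x_2 = 0$ since $\theta \in (0,\pi/6)$. Vanishing of the first coordinate gives $\cos\theta\sin\theta(x_1 - x_4) = 0$, hence $x_1 = x_4$. Now I would feed $x_1 = x_4$ into the ``imaginary part'' equations for $X$ coming from condition (A): equation \eqref{eqn6i} becomes $(x_1)_i(-2\sin^2\theta) + (x_1)_i(1 + 2\sin^2\theta) = (x_1)_i = 0$, and equations \eqref{eqn6j}, \eqref{eqn6k} (with $x_2 = 0$ so the $(x_2)_j,(x_2)_k$ terms drop out) become $(x_1)_j(\sin^2\theta + \cos^2\theta) = (x_1)_j = 0$ and likewise $(x_1)_k = 0$. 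Therefore $x_1 = 0$, so $x_4 = 0$. Finally equation \eqref{eqn5i} forces $(x_3)_i = 0$ and equations \eqref{eqn5j}, \eqref{eqn5k} (again using $x_2 = 0$) force $(x_3)_j = (x_3)_k = 0$, so $x_3 = 0$. Hence $X = 0$, contradicting the linear independence of $\{X, Y\}$.

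Next I would handle the case $w = 0$. The second coordinate gives $\cos\theta\,y_2 = 0$, hence $y_2 = 0$. For the first coordinate, I would split $y_1 = \operatorname{Re}(y_1) + \operatorname{Im}(y_1)$ and note that the real part of the first coordinate of $w$ is exactly $\operatorname{Re}(y_1)$ while the imaginary part is $(\cos^2\theta - \sin^2\theta)\operatorname{Im}(y_1) - \sin\theta\cos\theta\, y_3$; so $w = 0$ forces $\operatorname{Re}(y_1) = 0$ and $(\cos^2\theta - \sin^2\theta)\operatorname{Im}(y_1) = \sin\theta\cos\theta\, y_3$. Now I would bring in equations \eqref{eqn7i}, \eqref{eqn7j}, \eqref{eqn7k}: with $y_2 = 0$ these read $-4\sin\theta\cos\theta(y_1)_i + (2\sin^2\theta + 1)(y_3)_i = 0$ and $2\sin\theta\cos\theta(y_1)_\ell + \cos^2\theta(y_3)_\ell = 0$ for $\ell \in \{j,k\}$. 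Combining each of these with the corresponding component of the relation $(\cos^2\theta - \sin^2\theta)(y_1)_\ell = \sin\theta\cos\theta\,(y_3)_\ell$ gives, in each component, a $2\times 2$ homogeneous linear system in $\big((y_1)_\ell, (y_3)_\ell\big)$; the key point is that for $\theta \in (0,\pi/6)$ the determinant of each such system is nonzero, forcing $(y_1)_\ell = (y_3)_\ell = 0$ for $\ell \in \{i,j,k\}$, and together with $\operatorname{Re}(y_1) = 0$ this gives $y_1 = 0$ and $y_3 = 0$. Hence $Y = 0$, again a contradiction.

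The main obstacle is the determinant check in the $w = 0$ case: one must verify that the $2\times 2$ systems are genuinely nonsingular on the whole interval $(0,\pi/6)$ and not merely at a generic point. For the $i$-component the determinant is, up to sign, $(\cos^2\theta - \sin^2\theta)(2\sin^2\theta + 1) + 4\sin^2\theta\cos^2\theta$, and for the $j$- and $k$-components it is $(\cos^2\theta - \sin^2\theta)\cos^2\theta + 2\sin^2\theta\cos^2\theta = \cos^2\theta(1 - 2\sin^2\theta + 2\sin^2\theta) = \cos^2\theta$ — wait, this needs to be checked carefully; in any case each expression should be shown positive for $\theta \in (0,\pi/6)$, which is where the hypothesis $\theta < \pi/6$ (equivalently $\sin^2\theta < 1/4$, $\cos 2\theta > 1/2$) is used. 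Everything else is routine substitution.
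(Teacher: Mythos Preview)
Your argument is correct and follows the same route as the paper: contradiction via $X=0$ (from $v=0$) and $Y=0$ (from $w=0$), using the linear relations \eqref{eqn5i}--\eqref{eqn7k}. One correction to your flagged computation: the $i$-component determinant is $(\cos^2\theta-\sin^2\theta)(2\sin^2\theta+1)\,\mathbf{-}\,4\sin^2\theta\cos^2\theta = 1-4\sin^2\theta$ (a minus sign, not plus), which vanishes exactly at $\theta=\pi/6$ and is positive on $(0,\pi/6)$ --- so this is indeed precisely where the hypothesis $\theta<\pi/6$ enters, as you suspected; your $j,k$-determinant $\cos^2\theta$ is correct as written.
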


\begin{proof}

Suppose for a contradiction that $v = 0$.  Since $0 < \theta < \pi/6$, $v = 0$ implies $x_2 = 0$ and $x_1 = x_4$.  Then equations \eqref{eqn6i}, \eqref{eqn6j}, and \eqref{eqn6k} imply $x_1 = x_4 = 0$.  Then equations \eqref{eqn5i}, \eqref{eqn5j}, and \eqref{eqn5k} imply $x_3$ vanishes as well.  Thus, in this case, $X = 0$, contradicting the fact that $X$ and $Y$ are linearly independent.  Thus, $v\neq 0$.

\

Now, suppose $w = 0$, so $y_2 = 0$, $\operatorname{Re}(y_1) = 0$ and $\operatorname{Im}(y_1) = y_1 = \frac{\sin\theta \cos\theta}{\cos^2\theta-\sin^2\theta} y_3$.  The latter equation implies that the $i$,$j$, and $k$ components of $y_1$ and $y_3$ are positive multiplies of each other.  However,  equations \eqref{eqn7j} and \eqref{eqn7k} imply the $j$ and $k$ components of $y_1$ and $y_3$ are negative multiples of each other.  Thus, we must have $(y_1)_j = (y_1)_k = (y_3)_j = (y_3)_k = 0$.

Rearranging equation \eqref{eqn7i}, we see $y_1 = \frac{2\sin^2\theta + 1}{4\sin\theta\cos\theta} y_3$.  Combining this with the above equation $y_1 = \frac{\sin\theta \cos\theta}{\cos^2\theta - \sin^2\theta} y_3$, we see that either $y_1 = y_3 = 0$, or $\theta$ must satisfy the equation $$\frac{\sin\theta \cos\theta}{\cos^2\theta - \sin^2\theta} = \frac{2\sin^2\theta + 1}{4\sin\theta\cos\theta}.$$  Clearing denominators and simplifying gives $2\sin^2\theta \cos^2\theta + 2\sin^4 \theta +\sin^2\theta = \cos^2\theta $.  Factoring $\sin^2\theta$ out of the expression $2\sin^2 \theta \cos^2\theta + 2\sin^4\theta$, we see this expression simplifies to $2\sin^2\theta$.  Substituting this back in gives the equation $3\sin^2\theta = \cos^2\theta$ which has no solutions in $(0,\pi/6)$.

Thus, for $\theta \in (0,\pi/6)$, we conclude $y_1 = y_3 = 0$, which implies $Y = 0$, again contradicting the fact that $X$ and $Y$ are linearly independent.

\end{proof}

Using condition \eqref{eqn4}, it follows that by rescaling $X$, we may thus assume $v = w$.  Further, the first component of $v$ is purely imaginary, and hence $\operatorname{Re}(y_1) = 0 $, that is, $y_1 = \operatorname{Im} y_1$.  Thus, the condition \eqref{eqn4} is equivalent to the following two equations:  \begin{equation}\label{eqnv1} \cos\theta\sin\theta\, (x_1 - x_4) = (\cos^2\theta - \sin^2\theta)\, y_1 - \sin\theta \cos\theta\, y_3 \tag{4.1}\end{equation}  \begin{equation}\label{eqnv2} y_2 = -\tan\theta \,\overline{x}_2 . \tag{4.2}\end{equation}

\begin{proposition}\label{fnot0}  For any $\theta\in (0,\pi/6)$, $x_2, y_1,$ and $y_2$ are all non-zero.  \end{proposition}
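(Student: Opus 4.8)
The plan is to show that if any one of $x_2$, $y_1$, $y_2$ vanishes, then all three must vanish, and then to derive a contradiction with the linear independence of $X$ and $Y$ by forcing either $X = 0$ or $Y = 0$. The starting observation is that equation \eqref{eqnv2} immediately ties $y_2$ and $x_2$ together: $y_2 = 0$ iff $x_2 = 0$. So it suffices to treat the two cases ``$x_2 = y_2 = 0$'' and ``$y_1 = 0$'' and in each case propagate the vanishing through the remaining equations of Lemma \ref{equations}.

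In the case $x_2 = y_2 = 0$, I would first feed this into equations \eqref{eqn5j} and \eqref{eqn5k} to conclude $(x_3)_j = (x_3)_k = 0$, and into \eqref{eqn6j}, \eqref{eqn6k} to get relations between the $j,k$ components of $x_1$ and $x_4$; combined with \eqref{eqn5i} and \eqref{eqn6i} this should pin down enough of $x_1, x_3, x_4$. Meanwhile with $y_2 = 0$, equations \eqref{eqn7j} and \eqref{eqn7k} force $(y_1)_j = (y_1)_k = (y_3)_j = (y_3)_k = 0$ (the coefficients $2\sin\theta\cos\theta$ and $\cos^2\theta$ are nonzero on $(0,\pi/6)$), and \eqref{eqn7i} relates $(y_1)_i$ and $(y_3)_i$. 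Then \eqref{eqnv1}, which now reads component-wise in $i$ only (since everything else has been killed), together with \eqref{eqn3} (linear dependence of $x_4$ and $y_3$, both now purely in the $i$ direction or zero) should over-determine the system. The expectation is that the trigonometric coefficients that appear — essentially the same combination $3\sin^2\theta = \cos^2\theta$ type relation seen in the previous proposition, or something like $3\sin^2\theta - \cos^2\theta$ — have no root in $(0,\pi/6)$, forcing everything to zero and hence $X = Y = 0$, a contradiction.

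In the case $y_1 = 0$ (equivalently $\operatorname{Im}y_1 = 0$, since we already reduced to $\operatorname{Re}y_1 = 0$), equations \eqref{eqn7i}, \eqref{eqn7j}, \eqref{eqn7k} become homogeneous in $y_2$ and $y_3$: \eqref{eqn7i} gives $(y_3)_i = 0$, while \eqref{eqn7j}, \eqref{eqn7k} give $(y_3)_j, (y_3)_k$ as nonzero multiples of $(y_2)_j, (y_2)_k$. So $y_3$ is determined by $y_2$. Then \eqref{eqnv2} gives $x_2 = -\cot\theta\, \overline{y_2}$, and \eqref{eqnv1} (whose right side is now $-\sin\theta\cos\theta\, y_3$, purely in the $j,k$ directions) relates $x_1 - x_4$ to $y_3$, hence to $y_2$. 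Plugging $x_2$ in terms of $y_2$ into \eqref{eqn5j}, \eqref{eqn5k}, \eqref{eqn6j}, \eqref{eqn6k} should then give a linear system in $(y_2)_j, (y_2)_k$ (and the corresponding components of $x_1, x_3, x_4$) whose only solution, for $\theta \in (0,\pi/6)$, is the trivial one; this again forces $y_2 = 0$, hence $y_1 = y_2 = y_3 = 0$ so $Y = 0$, a contradiction.

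The main obstacle I anticipate is purely computational bookkeeping: keeping the $i$, $j$, $k$ components separated and making sure the trigonometric coefficient one divides by — or sets equal to zero — is genuinely nonvanishing on $(0,\pi/6)$. The restriction to $\theta < \pi/6$ (rather than $\pi/4$) is presumably exactly what is needed to rule out the relevant trig identity, so the delicate point is identifying which polynomial-in-$\sin^2\theta$ must be shown to have no root in that range and checking it does not. I do not expect any conceptual difficulty beyond the linear-algebra-over-$\mathbb{H}$ manipulations already set up in Lemma \ref{equations}; the symmetry between the two cases (vanishing of the ``$X$ side'' versus the ``$Y$ side'') suggests the two arguments will mirror each other closely.
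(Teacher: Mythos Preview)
Your proposal has a concrete error and a missing ingredient. In Case~1 you claim that with $y_2 = 0$, equations \eqref{eqn7j} and \eqref{eqn7k} force $(y_1)_j = (y_1)_k = (y_3)_j = (y_3)_k = 0$. This is false: equation \eqref{eqn7j} with $y_2 = 0$ reads $2\sin\theta\cos\theta\,(y_1)_j + \cos^2\theta\,(y_3)_j = 0$, a single linear relation in two unknowns, so it only gives $(y_3)_j = -2\tan\theta\,(y_1)_j$, not vanishing of both. If you carry the linear system (equations \eqref{eqn5i}--\eqref{eqn7k} together with \eqref{eqnv1} and condition \eqref{eqn3}) through honestly in this case, you find nontrivial solutions: for instance, taking everything to have only a $j$ component, the relations are all consistent with $(x_4)_j$ a free parameter. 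The same under-determination occurs in your Case~2.

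What you are missing is the nonlinear quaternionic equation \eqref{eqn1}. The paper's argument hinges on it: assuming $y_2 = 0$ (hence $x_2 = 0$ by \eqref{eqnv2}), equation \eqref{eqn1} becomes $x_1 y_1 = y_1 x_4$, and taking quaternionic norms gives $|x_1| = |x_4|$. Meanwhile the linear equations \eqref{eqn6i}--\eqref{eqn6k} (with $x_2 = 0$) show that each component of $x_1$ dominates the corresponding component of $x_4$ in magnitude, with equality only at zero; combined with $|x_1| = |x_4|$ this forces $x_1 = x_4 = 0$, whence $X = 0$. Once $y_2 \neq 0$ is established, equation \eqref{eqn1} again disposes of the case $y_1 = 0$ in one line: it would give $x_2 y_2 = 0$, impossible since quaternions have no zero divisors. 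So equation \eqref{eqn1} is doing essential work that the linear equations alone cannot, and your plan to rely solely on the linear relations will not close.
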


\begin{proof}

Assume for a contradiction that $y_2 = 0$.    Note that, because all the coefficients in equations \eqref{eqn7i}, \eqref{eqn7j}, \eqref{eqn7k} are non-zero, it follows that $y_1 = 0$ iff $y_3 = 0$.  Because $Y\neq 0$, it follows that $y_1\neq 0$.

Rearranging \eqref{eqn1} gives $x_1 y_1 = y_1 x_4$.  Taking lengths, we see that $|x_1| = |x_4|$.  We now compare the $i$, $j$, and $k$ component of $x_1$ and $x_4$.

For the $i$ component, we rearrange equation \eqref{eqn6i} to obtain $$(x_1)_i = \frac{1+2\sin^2\theta}{2\sin^2\theta} (x_4)_i = \left(1 + \frac{1}{2\sin^2\theta }\right) (x_4)_i.$$  Since $1 + \frac{1}{2\sin^2\theta} > 0$, we conclude that $|(x_1)_i| \geq |(x_4)|_i$ with equality iff $(x_1)_i = (x_4)_i = 0$.

For the $j$ component, we first remark that equation \eqref{eqnv2} shows that $x_2=0$ because $y_2 = 0$.  Then, rearranging equations \eqref{eqn6j} gives $$(x_1)_j = -\frac{\cos^2\theta }{\sin^2\theta} (x_4)_j .$$ Thus, since $0<\theta < \pi/6$, we conclude that $|(x_1)_j|\geq |(x_4)_j|$ with equality iff $(x_1)_j = (x_4)_j = 0$.  The same argument shows $|(x_1)_k|\geq |(x_4)_k|$ with equality iff $(x_1)_k = (x_4)_k = 0.$

Thus, each component of $x_1$ is at least as large, in magnitude, as the corresponding component of $x_4$.  Hence, since $|x_1| = |x_4|$, it follows that each of these inequalities must be equalities, so $x_1 = x_4 = 0$.  Since we have already shown $x_2 = 0$, equations \eqref{eqn5i}, \eqref{eqn5j}, and \eqref{eqn5k} force $x_3 = 0$ as well.  That is, $X = 0$, a contradiction.  Thus, $y_2\neq 0$.

Finally, it follows from equation \eqref{eqnv2} that $x_2\neq 0$, and from \eqref{eqn1}, we see that since $x_2y_2\neq 0$, that $y_1\neq 0$.
\end{proof}

We now show that $x_1$ cannot be equal to $x_4$.

\begin{proposition}\label{anotd} For every $\theta \in (0,\pi/6)$, $x_1\neq x_4$.

\end{proposition}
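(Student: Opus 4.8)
The plan is to argue by contradiction: assume $x_1 = x_4$ for some $\theta \in (0,\pi/6)$ and derive that either $X$ or $Y$ must vanish, contradicting linear independence. The starting point is equation \eqref{eqnv1}, which under the assumption $x_1 = x_4$ collapses the left-hand side to zero, giving $(\cos^2\theta - \sin^2\theta)\, y_1 = \sin\theta\cos\theta\, y_3$. Since $\theta \in (0,\pi/6)$, the coefficient $\cos^2\theta - \sin^2\theta$ is nonzero, so this expresses $y_1$ as a positive scalar multiple of $y_3$ (note $y_1 = \operatorname{Im} y_1$ has already been established). In particular the $i$, $j$, $k$ components of $y_1$ and $y_3$ are related by this single positive constant.

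Next I would play this relation against equations \eqref{eqn7i}, \eqref{eqn7j}, \eqref{eqn7k}, exactly as in the proof that $w \neq 0$: equations \eqref{eqn7j} and \eqref{eqn7k} force $(y_1)_j$, $(y_1)_k$, $(y_3)_j$, $(y_3)_k$ to be negative multiples of one another, which is incompatible with the positive multiple from \eqref{eqnv1} unless $(y_1)_j = (y_1)_k = (y_3)_j = (y_3)_k = 0$. Then \eqref{eqn7i} combined with the $i$-component of the relation from \eqref{eqnv1} yields a scalar equation in $\theta$ alone (of the same flavor as the $3\sin^2\theta = \cos^2\theta$ identity appearing earlier), which has no solution in $(0,\pi/6)$; hence $y_1 = y_3 = 0$. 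By Proposition \ref{fnot0} this already contradicts $y_1 \neq 0$, so we are done. An alternative finish, if one prefers not to invoke Proposition \ref{fnot0}: once $y_1 = y_3 = 0$, equation \eqref{eqnv2} gives $y_2 = -\tan\theta\,\overline{x_2}$, and then equation \eqref{eqn1} reads $x_1 y_1 + x_2 y_2 - y_1 x_4 = x_2 y_2 = -\tan\theta\, x_2\overline{x_2} = -\tan\theta\,|x_2|^2 = 0$, forcing $x_2 = 0$, hence $y_2 = 0$, hence $Y = 0$, a contradiction.

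I expect the main obstacle to be purely bookkeeping: making sure the sign of the scalar relating $y_1$ and $y_3$ coming out of \eqref{eqnv1} is genuinely opposite to the one forced by \eqref{eqn7j}/\eqref{eqn7k} throughout the range $\theta \in (0,\pi/6)$ — i.e., checking that $\sin\theta\cos\theta/(\cos^2\theta-\sin^2\theta) > 0$ there and that the coefficients in \eqref{eqn7j}, \eqref{eqn7k} have the claimed signs — and then verifying that the resulting single-variable trigonometric equation from the $i$-component really is solution-free on $(0,\pi/6)$. None of this is deep; it parallels calculations already carried out in the preceding proposition, so the argument should be short. The cleanest writeup will likely cite the earlier analysis of the case $w = 0$ verbatim, since the system $y_1 \parallel y_3$ together with \eqref{eqn7i}--\eqref{eqn7k} is essentially identical to what was dispatched there.
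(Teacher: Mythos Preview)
Your proposal has a genuine gap. The parallel you draw to the earlier $w=0$ argument does not carry over: in that argument one has $y_2=0$ (this is part of the hypothesis $w=0$), so equations \eqref{eqn7j} and \eqref{eqn7k} reduce to $2\sin\theta\cos\theta\,(y_1)_\ell + \cos^2\theta\,(y_3)_\ell = 0$ and indeed force $(y_1)_\ell$ and $(y_3)_\ell$ to be negative multiples of each other. Under your hypothesis $x_1=x_4$, however, Proposition~\ref{fnot0} (which is already in force) gives $y_2\neq 0$, and equations \eqref{eqn7j}, \eqref{eqn7k} then retain the term $-2\sqrt{3}\sin\theta\,(y_2)_\ell$. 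With that term present you cannot conclude that $(y_1)_j,(y_3)_j$ (or the $k$-components) are negative multiples of each other, and the rest of the argument collapses. Since $(y_2)_\ell = \tan\theta\,(x_2)_\ell$ via \eqref{eqnv2} and you have no independent control over $(x_2)_j,(x_2)_k$ at this stage, there is no evident way to repair this step within your framework.

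The paper's proof avoids this entirely by going straight to equation \eqref{eqn1}. After substituting \eqref{eqnv2}, equation \eqref{eqn1} reads $x_1 y_1 - y_1 x_4 - \tan\theta\,|x_2|^2 = 0$; setting $x_1=x_4$ turns the first two terms into the commutator $[x_1,y_1]$, which is purely imaginary since $x_1,y_1\in\operatorname{Im}\mathbb{H}$. Taking real parts forces $\tan\theta\,|x_2|^2=0$, hence $x_2=0$, contradicting Proposition~\ref{fnot0}. This is a two-line argument and does not touch \eqref{eqnv1} or \eqref{eqn7i}--\eqref{eqn7k} at all.
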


\begin{proof} Suppose for a contradiction that $x_1 = x_4$.  Then equation \eqref{eqn1} takes the form \begin{align*} 0 &= x_1 y_1 - y_1 x_1 -\tan\theta |x_2|^2 \\ &= [x_1,y_1] - \tan\theta |x_2|^2.\end{align*}  Since $x_1,y_1 \in\operatorname{Im}\mathbb{H}$, $[x_1,y_1]\in\operatorname{Im}\mathbb{H}$ as well, so we conclude that $\tan\theta |x_2|^2 = 0$.  Since $0<\theta<\pi/6$, it follows that $x_2 = 0$, a contradiction.

\end{proof}

Our next goal is to demonstrate the following proposition.

\begin{proposition}\label{dependent}

For every $\theta\in (0,\pi/6)$, $\dim_{\mathbb{R}} \operatorname{span}_{\mathbb{R}}\{x_1, x_4, y_1,y_3\} = 1$.
\end{proposition}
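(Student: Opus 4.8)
The plan is to show that the four imaginary quaternions $x_1, x_4, y_1, y_3$ all lie on a common real line, by first collapsing $\{x_1,x_4\}$ to a line, then $\{y_1,y_3\}$ to a line, and finally showing these two lines coincide. The key tools are equations \eqref{eqn1}, \eqref{eqn6i}--\eqref{eqn6k}, \eqref{eqn7i}--\eqref{eqn7k}, together with \eqref{eqnv1}, \eqref{eqnv2}, condition \eqref{eqn3}, and Propositions \ref{fnot0} and \ref{anotd}. Note that \eqref{eqn3} already tells us $x_4$ and $y_3$ are $\mathbb{R}$-dependent, so if I can show $x_1$ is a real multiple of one of them and that $y_1$ joins in, the span is forced to have dimension at most $1$; that it is exactly $1$ follows because, e.g., $y_1 \neq 0$ by Proposition \ref{fnot0}.

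First I would exploit equations \eqref{eqn6i}, \eqref{eqn6j}, \eqref{eqn6k}. Equation \eqref{eqn6i} directly expresses $(x_1)_i$ as a (nonzero, since $\theta\in(0,\pi/6)$) real multiple of $(x_4)_i$. For the $j$ and $k$ components, \eqref{eqn6j} and \eqref{eqn6k} relate $(x_1)_j, (x_4)_j$ and $(x_2)_j$ (resp.\ the $k$ analogues); combining these with \eqref{eqnv2}, which writes $y_2 = -\tan\theta\,\overline{x_2}$, and with \eqref{eqn2} (which I would rewrite using $x_2$ and $y_1,y_2$) should let me eliminate the $x_2$ contribution. The upshot I am aiming for is that $x_1$ is a fixed real scalar times $x_4$ on each of the $i$, $j$, $k$ axes — but the scalars on the three axes, read off from \eqref{eqn6i}, \eqref{eqn6j}, \eqref{eqn6k}, are generally different, so I will instead use equation \eqref{eqn1} as the main lever. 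Writing $x_1 = x_4$ is excluded by Proposition \ref{anotd}, so $x_1 - x_4 \neq 0$; rearranging \eqref{eqn1} gives $(x_1 - x_4)y_1 = -x_2 y_2 = \tan\theta\,|x_2|^2$ using \eqref{eqnv2}, and the right side is a nonnegative real number, which is nonzero by Proposition \ref{fnot0}. Hence $(x_1-x_4)y_1$ is a positive real, so $y_1 = \tan\theta\,|x_2|^2 (x_1-x_4)^{-1} = \tfrac{\tan\theta\,|x_2|^2}{|x_1-x_4|^2}\,\overline{(x_1-x_4)} = -\tfrac{\tan\theta\,|x_2|^2}{|x_1-x_4|^2}\,(x_1-x_4)$ since $x_1-x_4 \in \operatorname{Im}\mathbb{H}$. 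Therefore $y_1$ is a nonzero real multiple of $x_1 - x_4$.

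Next I would feed this back into \eqref{eqnv1}: $\cos\theta\sin\theta\,(x_1 - x_4) = (\cos^2\theta - \sin^2\theta)\,y_1 - \sin\theta\cos\theta\,y_3$. Since $y_1$ is a real multiple of $x_1 - x_4$, this equation expresses $y_3$ as a real multiple of $x_1 - x_4$ as well (the coefficient of $x_1-x_4$ being nonzero needs a short check: if it vanished then $y_3$ would be forced proportional to $y_1$ with a specific ratio, which I can cross-check against \eqref{eqn7i}, \eqref{eqn7j}, \eqref{eqn7k} — this is essentially the computation already done in the proof that $w \neq 0$, yielding $3\sin^2\theta = \cos^2\theta$, impossible on $(0,\pi/6)$). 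So now $y_1, y_3 \in \mathbb{R}\,(x_1 - x_4)$. Combining with condition \eqref{eqn3}, $x_4$ and $y_3$ are $\mathbb{R}$-dependent; since $y_3$ is a real multiple of $x_1 - x_4$ and (by a parallel argument using \eqref{eqn1} with sides swapped, or directly) $x_4$ lies in the same line, I get $x_1, x_4, y_1, y_3$ all in $\mathbb{R}\,(x_1-x_4)$, a one-dimensional space; it is genuinely one-dimensional because $x_1 - x_4 \neq 0$.

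The main obstacle I anticipate is cleanly showing that $x_4$ (and hence $x_1$) actually lies on the line $\mathbb{R}(x_1 - x_4)$ rather than merely that $x_1 - x_4$, $y_1$, $y_3$ are collinear — a priori $x_1$ and $x_4$ could be independent vectors whose difference happens to align with $y_1$. To handle this I would use \eqref{eqn3} to get $x_4 \parallel y_3$, then use the relation $y_3 \in \mathbb{R}(x_1 - x_4)$ just derived: if $x_4 \neq 0$ this forces $x_1 - x_4 \in \mathbb{R}x_4$, hence $x_1 \in \mathbb{R}x_4$, and everything collapses to $\mathbb{R}x_4$; if $x_4 = 0$, then $y_3 = 0$ by the proportionality, and then \eqref{eqn7i}--\eqref{eqn7k} (all coefficients nonzero) force $y_1 = 0$, contradicting Proposition \ref{fnot0}. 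So the case $x_4 = 0$ is impossible, $x_4 \neq 0$, and the span is $\mathbb{R}x_4$, one-dimensional. The bookkeeping of which coefficients are nonzero on $(0,\pi/6)$ — in particular that $\cos^2\theta - \sin^2\theta > 0$, $1 + 2\sin^2\theta > 0$, $\cos\theta - 1 \neq 0$, and the exclusion of the spurious root $3\sin^2\theta = \cos^2\theta$ — is the routine-but-essential part I would be careful with.
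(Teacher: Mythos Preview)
Your central step has a genuine gap: you rearrange \eqref{eqn1} to ``$(x_1 - x_4)y_1 = \tan\theta\,|x_2|^2$'', but \eqref{eqn1} actually reads $x_1 y_1 + x_2 y_2 - y_1 x_4 = 0$, and in the quaternions $x_1 y_1 - y_1 x_4 \neq (x_1 - x_4)y_1$ unless $[x_4,y_1]=0$. Two purely imaginary quaternions commute iff they are $\mathbb{R}$-dependent, so assuming $x_4 y_1 = y_1 x_4$ is tantamount to assuming $x_4$ and $y_1$ already lie on a common line --- which is precisely part of what you are trying to prove. Everything downstream (that $y_1$ is a real multiple of $x_1-x_4$, then feeding into \eqref{eqnv1}, etc.) rests on this, so the argument is circular. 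In fact, from $x_1 y_1 - y_1 x_4 \in \mathbb{R}$ one can only extract, after conjugating and subtracting, that $[x_1 + x_4,\,y_1]=0$, i.e.\ $y_1$ is parallel to $x_1 + x_4$ (when the latter is nonzero), not to $x_1 - x_4$.

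The paper circumvents this by first using \eqref{eqn3} to write $y_3 = \lambda x_4$ (when $x_4\neq 0$), then using \eqref{eqnv1} to express $y_1$ as a specific $\mathbb{R}$-combination of $x_1$ and $x_4$, and only then substituting into \eqref{eqn1}. Taking the imaginary part of the resulting equation isolates $(\lambda-2)\operatorname{Im}(x_1 x_4)$, which yields the dichotomy $\lambda = 2$ or $x_1\parallel x_4$; a separate (and nontrivial) argument then rules out $\lambda = 2$. Your treatment of the case $x_4=0$ is also off: condition \eqref{eqn3} is vacuous when $x_4=0$, so you cannot conclude $y_3=0$, and even if $y_3=0$ the equations \eqref{eqn7j}, \eqref{eqn7k} still involve $y_2$, so they do not force $y_1=0$. (The paper does not claim $x_4=0$ is impossible; it handles that case directly via the observation that then $x_1 y_1\in\mathbb{R}$, hence $x_1\parallel y_1$.)
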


Of course, since, by Proposition \ref{fnot0}, $y_1\neq 0$, the dimension of this span is at least $1$, so we need only show it is at most one.  We first show Proposition \ref{dependent} holds when $x_4 = 0$.

\begin{proposition}\label{x4not0} Assume $x_4 = 0$.  Then, for every $\theta \in (0,\pi/6)$, $$\dim_{\mathbb{R}} \operatorname{span}_{\mathbb{R}}\{x_1, x_4, y_1,y_3\} = 1.$$

\end{proposition}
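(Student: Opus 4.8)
The plan is to assume $x_4 = 0$ and derive enough constraints on the remaining data to pin down $x_1, y_1, y_3$ as real multiples of a single imaginary quaternion. With $x_4 = 0$, the defining equations simplify substantially. First I would feed $x_4 = 0$ into the component equations \eqref{eqn6i}, \eqref{eqn6j}, \eqref{eqn6k}: each becomes a relation forcing a component of $x_1$ proportional to a component of $x_2$. Concretely, \eqref{eqn6i} gives $(x_1)_i \sin^2\theta = 0$, hence $(x_1)_i = 0$; while \eqref{eqn6j} and \eqref{eqn6k} express $(x_1)_j$ and $(x_1)_k$ in terms of $(x_2)_j$ and $(x_2)_k$ (with a coefficient $2\sqrt3(\cos\theta - 1)/\sin^2\theta$). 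Combining with \eqref{eqn5i}, \eqref{eqn5j}, \eqref{eqn5k}, which already tie $x_3$ to $x_1$ and $x_2$, I expect to reduce the unknowns in $X$ essentially to the single quaternion $x_2$ (with $(x_1)_i=0$ and $x_1$, $x_3$ determined by $x_2$).

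Next I would bring in equation \eqref{eqn1}, which with $x_4 = 0$ reads $x_1 y_1 + x_2 y_2 = 0$, together with \eqref{eqnv2}, namely $y_2 = -\tan\theta\,\overline{x_2}$. Substituting gives $x_1 y_1 = \tan\theta\, x_2 \overline{x_2} = \tan\theta |x_2|^2$, a positive real number (note $x_2 \neq 0$ by Proposition~\ref{fnot0}). So $x_1 y_1$ is a nonzero real; since $y_1 \in \operatorname{Im}\mathbb{H}$ is nonzero, this forces $x_1 \neq 0$ and shows $x_1$ and $y_1$ are antiparallel imaginary quaternions — in particular $\operatorname{span}_{\mathbb{R}}\{x_1, y_1\}$ is one-dimensional. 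Similarly I would use equation \eqref{eqn2}, which with $x_4 = 0$ reads $-\overline{x_2} y_1 + x_3 y_2 = 0$, i.e. $\overline{x_2} y_1 = -\tan\theta\, x_3 \overline{x_2}$, to get a compatible relation; this should be automatically consistent with the above once $x_3$ is expressed via $x_2$, but it may give an additional scalar constraint that is worth recording.

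It then remains to show $y_3$ lies in this same real line. The natural tool is \eqref{eqnv1}, which with $x_4 = 0$ becomes $\cos\theta\sin\theta\, x_1 = (\cos^2\theta - \sin^2\theta) y_1 - \sin\theta\cos\theta\, y_3$, so $y_3$ is an explicit real-linear combination of $x_1$ and $y_1$; since those two already span a one-dimensional real subspace, so does $\{x_1, y_1, y_3\}$, and adjoining $x_4 = 0$ changes nothing. That gives $\dim_{\mathbb{R}}\operatorname{span}_{\mathbb{R}}\{x_1, x_4, y_1, y_3\} \le 1$, and since $y_1 \neq 0$ the dimension is exactly $1$.

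The main obstacle I anticipate is not any single step but making sure the chain of substitutions is actually consistent — that is, confirming that after imposing \eqref{eqn5i}--\eqref{eqn6k} and \eqref{eqn1}--\eqref{eqn2} the quaternion $x_2$ is genuinely forced to be nonzero and compatible, rather than the hypothesis $x_4 = 0$ collapsing everything to $X = 0$ (which would make the statement vacuous but wouldn't be what's claimed here). I would need to check carefully that \eqref{eqn1} gives $x_1 y_1 \in \mathbb{R}_{>0}$ without over-constraining: the subtlety is that $x_1 \in \operatorname{Im}\mathbb{H}$ and $y_1 \in \operatorname{Im}\mathbb{H}$ with $x_1 y_1$ real means $x_1 y_1 = -\langle x_1, y_1\rangle + x_1 \times y_1$ has vanishing imaginary part, forcing $x_1 \parallel y_1$; one must verify the real part $-\langle x_1, y_1 \rangle$ has the correct sign ($= \tan\theta |x_2|^2 > 0$), which uses $x_2 \neq 0$. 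The component analysis from \eqref{eqn6j}, \eqref{eqn6k} (and \eqref{eqn5j}, \eqref{eqn5k}) must then be shown to be compatible with $x_1$ being a negative multiple of $y_1$; this is where a short but careful computation is unavoidable.
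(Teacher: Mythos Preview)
Your core argument is correct and matches the paper's proof exactly: use \eqref{eqn1} with $x_4=0$ and \eqref{eqnv2} to get $x_1y_1=\tan\theta\,|x_2|^2\in\mathbb{R}$, deduce that the purely imaginary $x_1,y_1$ are real-dependent, then use \eqref{eqnv1} to express $y_3$ as a real combination of $x_1$ and $y_1$. The preliminary pass through \eqref{eqn5i}--\eqref{eqn6k}, the side discussion of \eqref{eqn2}, and the closing worry about sign compatibility are all unnecessary for this proposition---the span statement follows immediately from the two steps above, and the sign/consistency issues you flag are handled elsewhere in the paper (and are not part of what is being claimed here).
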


\begin{proof}
Equation \eqref{eqn1} with $x_4 = 0$ is $x_1 y_1 - \tan\theta |x_2|^2 = 0$.  In particular, $x_1 y_1\in\mathbb{R}$.  Since $x_1$ and $y_1$ are purely imaginary, this implies $\{x_1,y_1\}$ is linearly dependent over $\mathbb{R}$.  Now \eqref{eqnv2} implies that $y_3 =  -x_1 + \frac{\cos^2\theta - \sin^2\theta}{\cos\theta\sin\theta} y_1$, so $\{x_1,y_1, y_3\}$ is linearly dependent.  Since $x_4 = 0$, $\operatorname{span}_\mathbb{R} \{x_1, x_4, y_1, y_3\}$ is $1$ dimensional.

\end{proof}

We now investigate the case where $x_4\neq 0$.  Then by condition \eqref{eqn3}, we may write $y_3 = \lambda x_4$ for some real number $\lambda$.

\begin{proposition}\label{choice}  For $\theta \in (0,\pi/6)$, either $\lambda = 2$ or $\dim_\mathbb{R} \operatorname{span}_\mathbb{R}\{x_1, x_4, y_1, y_3\} = 1$.

\end{proposition}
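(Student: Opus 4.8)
The plan is to combine equations \eqref{eqn1}, \eqref{eqnv1}, and \eqref{eqnv2} with the elementary fact that the square of a purely imaginary quaternion is a (nonpositive) real number. First, since $x_4 \neq 0$, condition \eqref{eqn3} lets us write $y_3 = \lambda x_4$ for a real number $\lambda$, as in the statement. Substituting $y_3 = \lambda x_4$ into \eqref{eqnv1} and solving for $y_1$ gives
\[
y_1 = \mu\bigl(x_1 + (\lambda-1)x_4\bigr), \qquad \mu := \frac{\sin\theta\cos\theta}{\cos^2\theta - \sin^2\theta} = \tfrac12\tan 2\theta ,
\]
which is legitimate because $\cos^2\theta - \sin^2\theta = \cos 2\theta > 0$ for $\theta \in (0,\pi/6)$; note also that $\mu > 0$ there. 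This already shows $y_1, y_3 \in \operatorname{span}_{\mathbb{R}}\{x_1, x_4\}$, so $\operatorname{span}_{\mathbb{R}}\{x_1, x_4, y_1, y_3\} = \operatorname{span}_{\mathbb{R}}\{x_1, x_4\}$, and it remains only to show that, unless $\lambda = 2$, the vectors $x_1$ and $x_4$ are $\mathbb{R}$-linearly dependent (they span a line, not a plane), since $x_4 \neq 0$.

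Next I would substitute this formula for $y_1$, together with $y_2 = -\tan\theta\,\overline{x_2}$ from \eqref{eqnv2} (so that $x_2 y_2 = -\tan\theta\,x_2\overline{x_2} = -\tan\theta\,|x_2|^2 \in \mathbb{R}$), into equation \eqref{eqn1}. Expanding $x_1 y_1 - y_1 x_4$ and collecting terms, the left-hand side of \eqref{eqn1} becomes
\[
\mu\bigl(x_1^2 + (\lambda-2)\,x_1 x_4 - (\lambda-1)\,x_4^2\bigr) - \tan\theta\,|x_2|^2 .
\]
Because $x_1, x_4 \in \operatorname{Im}\mathbb{H}$ we have $x_1^2 = -|x_1|^2$ and $x_4^2 = -|x_4|^2$, both real, so the only term above that can fail to be real is $\mu(\lambda-2)\,x_1 x_4$. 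Taking imaginary parts of equation \eqref{eqn1} therefore yields $\mu(\lambda-2)\,\operatorname{Im}(x_1 x_4) = 0$. For purely imaginary quaternions, $\operatorname{Im}(x_1 x_4)$ is (up to sign) the vector cross product $x_1 \times x_4$, which vanishes exactly when $\{x_1, x_4\}$ is linearly dependent over $\mathbb{R}$. Since $\mu \neq 0$, we conclude that either $\lambda = 2$, or $x_1$ and $x_4$ are $\mathbb{R}$-dependent; in the latter case, as $x_4 \neq 0$, $\dim_{\mathbb{R}}\operatorname{span}_{\mathbb{R}}\{x_1, x_4\} = 1$, which by the first paragraph equals $\dim_{\mathbb{R}}\operatorname{span}_{\mathbb{R}}\{x_1, x_4, y_1, y_3\}$.

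The computation is routine; the one idea is to isolate the imaginary part of \eqref{eqn1} after the substitutions, at which point everything collapses to the single scalar equation $\mu(\lambda-2)(x_1 \times x_4) = 0$. The only points requiring care are the sign bookkeeping when multiplying out $x_1 y_1 - y_1 x_4$ with $y_1$ a real combination of $x_1$ and $x_4$, and checking that the denominator $\cos 2\theta$ and the factor $\mu$ are nonzero on the relevant range of $\theta$ — this is exactly where the hypothesis $\theta \in (0,\pi/6)$ is used (indeed $\theta \in (0,\pi/4)$ would already suffice for this proposition). I do not anticipate a genuine obstacle here; the substantive restriction $\lambda = 2$ is simply what survives after the real part of \eqref{eqn1} is discarded.
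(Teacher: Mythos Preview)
Your argument is correct and follows essentially the same route as the paper: solve \eqref{eqnv1} for $y_1$ in terms of $x_1,x_4$ using $y_3=\lambda x_4$, substitute into \eqref{eqn1} together with \eqref{eqnv2}, and read off the imaginary part to get $(\lambda-2)\operatorname{Im}(x_1 x_4)=0$. Your bookkeeping with the factor $\mu=\sin\theta\cos\theta/(\cos^2\theta-\sin^2\theta)$ is in fact slightly more careful than the paper's (which drops the denominator in the displayed formula for $y_1$), and your remark that $\theta\in(0,\pi/4)$ already suffices here is a valid sharpening.
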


\begin{proof}

Rearranging \eqref{eqnv1} gives $y_1 = \cos\theta \sin\theta (x_1 + (\lambda - 1) x_4)$.  Substituting this into \eqref{eqn1} gives \begin{equation}\label{lambdasimp} 0 = \cos\theta \sin\theta\left[ x_1 (x_1 + (\lambda-1)x_4) - (x_1 + (\lambda -1)x_4)x_4\right] -\tan\theta |x_2|^2.\tag{$\ast_1$}\end{equation}  Recalling the square of a purely imaginary number is real, the imaginary part of equation \eqref{lambdasimp} simplifies to $$0 = \cos\theta\sin\theta (\lambda - 2) \operatorname{Im}(x_1 x_4).$$

So, if $\lambda \neq 2$, then $\operatorname{Im}(x_1x_4) = 0$, that is, $\{x_1, x_4\}$ must be linearly dependent.  Recalling $y_3 = \lambda x_4$ and $y_1 = \cos\theta \sin\theta(x_1 + (\lambda -1) x_4)$, we see that if $\lambda \neq 2$, then $\dim_\mathbb{R} \operatorname{span}_\mathbb{R} \{x_1, x_4, y_3, y_1\} = 1$.

\end{proof}

We now show $\lambda = 2$ cannot occur.  Once we show this, Proposition \ref{choice} will then imply Proposition \ref{dependent} holds even when $x_4\neq 0$.

\begin{proposition}  For $\theta \in (0,\pi/6)$, $\lambda \neq 2$.

\end{proposition}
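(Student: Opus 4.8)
The plan is to assume $\lambda = 2$ and derive a contradiction. With $\lambda = 2$ we have, exactly as in the proof of Proposition \ref{choice}, $y_1 = \cos\theta\sin\theta\,(x_1+x_4)$, and in addition $y_3 = 2x_4$ and, by \eqref{eqnv2}, $y_2 = -\tan\theta\,\overline{x}_2$. The first key observation is that in \eqref{lambdasimp} with $\lambda = 2$ one has $x_1(x_1+x_4)-(x_1+x_4)x_4 = x_1^2 - x_4^2 = |x_4|^2 - |x_1|^2$, since $x_1,x_4\in\operatorname{Im}\mathbb{H}$; thus \eqref{lambdasimp} becomes $\cos\theta\sin\theta\,(|x_4|^2 - |x_1|^2) = \tan\theta\,|x_2|^2$, i.e. $|x_4|^2 - |x_1|^2 = \sec^2\theta\,|x_2|^2$, which is strictly positive because $x_2\neq 0$ by Proposition \ref{fnot0}. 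Hence $|x_1| < |x_4|$ (and $x_4\neq 0$ in this case).

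Next I would pin down $x_1$ in terms of $x_4$. Substituting the formulas for $y_1,y_3$ into \eqref{eqn7i} and using \eqref{eqn6i} to eliminate $(x_1)_i$ should collapse, after $\sin^2\theta+\cos^2\theta = 1$, to $\sin^2\theta\,(3-4\cos^2\theta)\,(x_4)_i = 0$; since $\theta\in(0,\pi/6)$ forces $\cos^2\theta\in(3/4,1)$ the factor $3-4\cos^2\theta$ is nonzero, so $(x_4)_i = 0$ and then $(x_1)_i = 0$ by \eqref{eqn6i}. For the $j$-component, substituting the formulas for $y_1,y_2,y_3$ into \eqref{eqn7j}, using \eqref{eqn6j} to eliminate $(x_2)_j$, and clearing the denominator $\cos\theta-1$ should --- with the factorizations $2c^4-2c^3-c^2+1 = (c-1)^2(2c^2+2c+1)$ and $2c^3-3c+1 = (c-1)(2c^2+2c-1)$ for $c = \cos\theta$, after which a common factor $(\cos\theta-1)^2$ divides out --- reduce to
$$(1-\cos^2\theta)(2\cos^2\theta+2\cos\theta+1)\,(x_1)_j = \cos^2\theta\,(2\cos^2\theta+2\cos\theta-1)\,(x_4)_j ,$$
and the identical computation with \eqref{eqn6k}, \eqref{eqn7k} gives the same relation for the $k$-components. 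Writing $A = (1-\cos^2\theta)(2\cos^2\theta+2\cos\theta+1)$, which is manifestly positive, and $B = \cos^2\theta\,(2\cos^2\theta+2\cos\theta-1)$, and combining with $(x_1)_i=(x_4)_i=0$, this says $A\,x_1 = B\,x_4$, hence $|x_1| = (|B|/A)\,|x_4|$.

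Finally I would collide these conclusions. From $|x_1| < |x_4|$ and $x_4\neq 0$ we get $|B| < A$. But a direct computation gives $B - A = 4\cos^4\theta + 4\cos^3\theta - 2\cos^2\theta - 2\cos\theta - 1 = 2\cos\theta\,(\cos\theta+1)(2\cos^2\theta-1) - 1$, and for $\theta\in(0,\pi/6)$ we have $\cos\theta > \sqrt3/2$, so $2\cos\theta\,(\cos\theta+1) = 2\cos^2\theta+2\cos\theta > \tfrac32+\sqrt3 > 3$ (using $\sqrt3 > \tfrac32$) while $2\cos^2\theta-1 > \tfrac12$; both factors being positive, $B - A > 3\cdot\tfrac12 - 1 = \tfrac12 > 0$, so $B > A > 0$ and $|B| = B > A$, contradicting $|B| < A$. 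Hence $\lambda\neq 2$. I expect the real work --- and the likeliest place for a sign slip --- to be the $j$- and $k$-component reduction in the second step, together with the sign check on the quartic $B-A$; everything else is short.
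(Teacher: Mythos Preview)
Your overall strategy is the same as the paper's: from \eqref{eqn1} one gets $|x_1|<|x_4|$, while equations \eqref{eqn6i}--\eqref{eqn6k} and \eqref{eqn7i}--\eqref{eqn7k}, together with the substitutions for $y_1,y_2,y_3$, force $|x_1|\ge |x_4|$. The paper organises the second half differently --- it first proves $(x_2)_j=(x_2)_k=0$ by comparing the two determinations of $(y_2)_j,(y_2)_k$ coming from \eqref{eqn7j},\eqref{eqn7k} and from \eqref{eqnv2}, and only then reads off $|x_1|\ge |x_4|$ from \eqref{eqn6i}--\eqref{eqn6k} as in the proof of Proposition~\ref{fnot0} --- whereas you eliminate $(x_2)_j$ directly via \eqref{eqn6j} inside \eqref{eqn7j}. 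These two routes are equivalent.

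There is, however, a genuine gap in your argument. The formula $y_1=\cos\theta\sin\theta\,(x_1+x_4)$ that you import ``as in the proof of Proposition~\ref{choice}'' is a typo in that proof; rearranging \eqref{eqnv1} with $y_3=2x_4$ actually gives
\[
y_1=\frac{\cos\theta\sin\theta}{\cos^2\theta-\sin^2\theta}\,(x_1+x_4),
\]
and this is the formula the paper uses in its own proof that $\lambda\neq 2$. Your inequality $|x_1|<|x_4|$ survives (the extra positive factor $1/(\cos^2\theta-\sin^2\theta)$ changes nothing there), but your polynomials $A$ and $B$ do not: with the correct $y_1$, the same elimination of $(x_2)_j$ from \eqref{eqn7j} via \eqref{eqn6j} yields, after dividing by the common factor $(\cos\theta-1)^2(2\cos\theta+1)$,
\[
(\cos^2\theta-1)\,(x_1)_j=\cos^2\theta\,(x_4)_j,\qquad\text{i.e.}\qquad (x_1)_j=-\frac{\cos^2\theta}{\sin^2\theta}\,(x_4)_j,
\]
and likewise for the $k$-component; combined with $(x_1)_i=(x_4)_i=0$ this gives $|x_1|=(\cos^2\theta/\sin^2\theta)\,|x_4|>|x_4|$ for $\theta\in(0,\pi/6)$, the desired contradiction. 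So your plan works once the $y_1$ formula is corrected; the specific $A$, $B$, and the quartic $B-A$ you wrote down are artefacts of the missing $1/(\cos^2\theta-\sin^2\theta)$ and should be discarded.
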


\begin{proof}

Assume for a contradiction that $\lambda = 2$.  We first show this implies the $j$ and $k$ components of $x_2$ and $y_2$ must vanish.

Given $x_2$ and $x_4$, equations \eqref{eqn6j} and \eqref{eqn6k} determine $x_1$:  $$x_1 = -\frac{\cos^2\theta \, x_4 + 2\sqrt{3}(\cos\theta - 1) x_2}{\sin^2 \theta}.$$  Substituting this into equation \eqref{eqnv1} and rearranging gives $$y_1 =  -\frac{\cos\theta}{\sin\theta} x_4 -\frac{2\sqrt{3} \cos\theta(\cos\theta - 1)}{\sin\theta(\cos^2\theta - \sin^2\theta)} x_2.$$  Then substituting this into \eqref{eqn7j} and \eqref{eqn7k}, we determine $(y_2)_j$ and $(y_2)_k$, $$(y_2)_j = -\frac{-2\cos^2\theta (\cos\theta - 1)}{\sin\theta (\cos^2\theta - \sin^2\theta)}(x_2)_j$$ and likewise for the $k$ component.

On the other hand, from equation \eqref{eqnv2}, $y_2 = -\frac{s}{c} \overline{x_2}$, so the $j$ and $k$ parts of $y_2$ are determined in a different way by those of $x_2$.  Thus, either $(x_2)_j = (y_2)_j = (x_2)_k = (y_2)_k = 0$ or \begin{equation}\label{lambdanot2} -\frac{-2\cos^2\theta (\cos\theta - 1)}{\sin\theta (\cos^2\theta - \sin^2\theta)} = \frac{\sin\theta}{\cos\theta}.\tag{$\ast_2$}\end{equation}  By clearing denominators and replacing $\sin^2 x$ with $1-\cos^2 x$ everywhere, \eqref{lambdanot2} is equivalent to $2\cos^3\theta - 3\cos^2\theta + 1 = 0$, which factors as $(\cos \theta - 1)^2(2\cos\theta + 1) = 0$.  But, on $(0,\pi/6)$, $0 < \cos\theta < 1$, so this equation is never satisfied on $(0,\pi/6)$.  It follows that if $\lambda =2$, then the $j$ and $k$ components of $x_2$ and $y_2$ vanish.

Because the $j$ and $k$ components of $x_2$ vanish, the proof of Proposition \ref{fnot0} shows that $|x_1|\geq |x_4|$ with equality only if $|x_1| = |x_4| = 0$.

Now, \eqref{eqnv1} gives $y_1 = \frac{\cos\theta \sin\theta}{cos^2\theta-\sin^2\theta}(x_1 + x_4)$.  Substituting this this into \eqref{eqn1}, we get $$\frac{\cos\theta \sin\theta}{\cos^2\theta -\sin^2\theta}(x_1^2 - x_4^2) =  \frac{\sin\theta}{\cos\theta}|x_2|^2.$$  Because $x_1$ is purely imaginary, $x_1^2 = -|x_1|^2$ and similarly for $x_4$, so this equation is equivalent to \begin{equation}\label{lambdanot0again} \frac{\cos\theta \sin\theta}{\cos^2\theta-\sin^2\theta}(|x_4|^2 - |x_1|^2) = \frac{\sin\theta}{\cos\theta} |x_2|^2.\tag{$\ast_3$}\end{equation}  Note that for $\theta \in (0,\pi/4)$, the coefficients in \eqref{lambdanot0again} on both the left hand side and right hand side are positive, and so, by Proposition \ref{fnot0}, the right side is positive.

On the other hand, since $|x_1|\geq |x_4|$, the left side is non-positive.  This contradiction implies $\lambda = 2$ cannot occur for any $\theta \in (0,\pi/6)$.

\end{proof}

Combining this with Proposition \ref{choice}, we may assume $\dim_{\mathbb{R}}\operatorname{span}_\mathbb{R}\{x_1, x_4, y_1, y_3\} = 1$.  In particular, the quaternions $x_1, x_4, y_1$, and $y_3$ commute with each other.  Further, since we have already shown $y_1\neq 0$,  each of $x_1$, $x_4$, and $y_3$ can be written as a multiple of $y_1$.

\begin{proposition}\label{noi}  Suppose $\theta \in (0,\pi/6)$.  Then the $i$ components of $x_1,x_4,y_1$, $y_3$ and $x_3$ are all zero.

\end{proposition}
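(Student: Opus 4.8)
The plan is to argue by contradiction: I assume $(y_1)_i \neq 0$ and show this over-determines $\theta$. By Proposition~\ref{dependent} together with $y_1 \neq 0$ (Proposition~\ref{fnot0}), the span being one-dimensional lets me write $x_1 = a\,y_1$, $x_4 = d\,y_1$, and $y_3 = e\,y_1$ for unique real scalars $a,d,e$. In particular $x_1,x_4,y_1,y_3$ are purely imaginary and pairwise proportional, so $y_1^2 = -|y_1|^2$ and products like $x_1 y_1$ and $y_1 x_4$ are real.

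First I would push these substitutions through the three ``$i$-component'' equations. Equation~\eqref{eqn6i} reads $(y_1)_i\big(-2\sin^2\theta\,a + (1+2\sin^2\theta)\,d\big)=0$, so since $(y_1)_i \neq 0$ it forces $d = \frac{2\sin^2\theta}{1+2\sin^2\theta}\,a$ and hence $a-d = \frac{a}{1+2\sin^2\theta}$; equation~\eqref{eqn7i} similarly forces $e = \frac{4\sin\theta\cos\theta}{1+2\sin^2\theta}$. Substituting $x_1 = a\,y_1$, $x_4 = d\,y_1$, $y_3 = e\,y_1$ into \eqref{eqnv1} and cancelling the nonzero quaternion $y_1$ gives the scalar identity $\sin\theta\cos\theta\,(a - d + e) = \cos^2\theta - \sin^2\theta$; inserting the values of $a-d$ and $e$ and simplifying with $\sin^2\theta + \cos^2\theta = 1$ collapses this to $a = \frac{\cos^2\theta - 3\sin^2\theta}{\sin\theta\cos\theta}$. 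Since $\cos^2\theta - 3\sin^2\theta = 1 - 4\sin^2\theta > 0$ for $\theta \in (0,\pi/6)$, I conclude $a - d = \frac{\cos^2\theta - 3\sin^2\theta}{\sin\theta\cos\theta\,(1+2\sin^2\theta)} > 0$.

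The decisive input is then equation~\eqref{eqn1}. With $x_1 = a\,y_1$, $x_4 = d\,y_1$, and $y_1^2 = -|y_1|^2$, \eqref{eqn1} rearranges to $x_2 y_2 = (a-d)\,|y_1|^2$, a strictly positive real number because $a-d > 0$ and $y_1 \neq 0$. On the other hand, \eqref{eqnv2} gives $y_2 = -\tan\theta\,\overline{x_2}$, so $x_2 y_2 = -\tan\theta\,|x_2|^2$, which is strictly negative since $\tan\theta > 0$ and $x_2 \neq 0$ (Proposition~\ref{fnot0}). This contradiction forces $(y_1)_i = 0$, hence $(x_1)_i = a\,(y_1)_i = 0$, $(x_4)_i = d\,(y_1)_i = 0$, and $(y_3)_i = e\,(y_1)_i = 0$; finally \eqref{eqn5i} yields $(x_3)_i = 3\,(x_1)_i = 0$.

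The calculation is short, and the main point to get right is that no case split (e.g.\ $x_4 = 0$ versus $x_4 \neq 0$, or $x_1 = x_4$) is needed: once the span is one-dimensional, \eqref{eqn6i}, \eqref{eqn7i}, and \eqref{eqnv1} determine $a$, $d$, $e$ uniformly as functions of $\theta$, and the contradiction comes entirely from comparing the value of $x_2 y_2$ forced by \eqref{eqn1} against the one forced by \eqref{eqnv2}. The only genuinely computational step — reducing \eqref{eqnv1} to $a = (\cos^2\theta - 3\sin^2\theta)/(\sin\theta\cos\theta)$ and checking positivity of this quantity on $(0,\pi/6)$ — is once again the inequality $\cos^2\theta > 3\sin^2\theta$ that has already appeared in the earlier propositions.
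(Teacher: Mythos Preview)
Your proof is correct and follows the paper's approach: assuming $(y_1)_i \neq 0$, use \eqref{eqn7i} and \eqref{eqnv1} to conclude that $x_1 - x_4$ is a positive multiple of $y_1$, then obtain a sign contradiction by combining \eqref{eqn1} with \eqref{eqnv2}. Your additional use of \eqref{eqn6i} to pin down $a$ itself is unnecessary---the paper computes only the combination $a-d$ directly from \eqref{eqn7i} and \eqref{eqnv1}---but it does no harm.
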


\begin{proof}

If $(y_1)_i = 0$, it follows from Proposition \ref{dependent}, together with the fact that $y_1\neq 0$ (Proposition \ref{fnot0}), that the $i$ component of $x_1$, $x_4$, and $y_3$ are all $0$ as well.  Then equation \eqref{eqn5i} shows $(x_3)_i = 0$ as well.  So, we need only show $(y_1)_i = 0$ when $\theta\in (0,\pi/6)$.

From equation \eqref{eqn7i}, using the fact that $y_3$ is a multiple of $y_1$, we see $y_3 = \frac{4\sin\theta\cos\theta}{2\sin^2\theta + 1} y_1$.  Substituting this into equation \eqref{eqnv1}, we see $\cos\theta\sin\theta(x_1 - x_4) = \cos^2\theta - \sin^2\theta - \cos\theta \sin\theta \frac{4\cos\theta\sin\theta}{2\sin^2\theta + 1} y_1$.  Upon substituting $\cos^2\theta = 1-\sin^2\theta$, the coefficient on the right simplifies to $\frac{1-4\sin^2 \theta }{1 + 2\sin^2 \theta}$.  Since $\theta \in(0,\pi/6)$, this coefficient is positive.  It follows that $x_1 - x_4$ is a positive multiple of $y_1$.

Now, note that equation \eqref{eqn1}, rearranged, takes the form $(x_1 - x_4) y_1 = \tan\theta |x_2|^2$.  Since $\theta\in(0,\pi/6)$,  the right hand side is positive.  But since $x_1 - x_4$ is a positive multiple of $y_1$, the left hand side is a positive multiple of $y_1^2$.  The square of any purely imaginary number is non-positive, so we have a contradiction.

\end{proof}

We now show that $x_3$ must be non-zero.

\begin{proposition}\label{x3not0} $x_3 \neq 0$

\end{proposition}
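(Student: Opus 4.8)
The plan is to argue by contradiction: suppose $x_3 = 0$ for some $\theta \in (0,\pi/6)$, and squeeze the resulting constraints until they become inconsistent.

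First I would use the condition (A) equations to kill the $j$ and $k$ components of everything in sight. From \eqref{eqn5j} and \eqref{eqn5k}, $x_3 = 0$ forces $(x_2)_j = (x_2)_k = 0$, so $x_2$ has vanishing $j$ and $k$ parts; by \eqref{eqnv2}, $y_2 = -\tan\theta\,\overline{x_2}$ then also has $(y_2)_j = (y_2)_k = 0$. Plugging $(y_2)_j = (y_2)_k = 0$ into \eqref{eqn7j} and \eqref{eqn7k} yields $(y_3)_j = -2\tan\theta\,(y_1)_j$ and $(y_3)_k = -2\tan\theta\,(y_1)_k$. Since Proposition \ref{noi} already gives $(y_1)_i = (y_3)_i = 0$, these two scalar identities combine into the single quaternionic relation $y_3 = -2\tan\theta\, y_1$.

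Next I would feed this into \eqref{eqnv1}. The coefficient of $y_1$ collapses: $(\cos^2\theta - \sin^2\theta) - \sin\theta\cos\theta(-2\tan\theta) = \cos^2\theta + \sin^2\theta = 1$, so $\cos\theta\sin\theta\,(x_1 - x_4) = y_1$; in particular $x_1 - x_4$ is a strictly positive real multiple of $y_1$. On the other hand, rewriting \eqref{eqn1} with $x_2 y_2 = -\tan\theta|x_2|^2$ (from \eqref{eqnv2}) gives $x_1 y_1 - y_1 x_4 = \tan\theta|x_2|^2$. By Proposition \ref{dependent}, $x_1$ and $x_4$ are real multiples of $y_1$ and hence commute with it, so the left side equals $(x_1 - x_4)y_1$, a positive multiple of $y_1^2 = -|y_1|^2$, which is strictly negative because $y_1 \neq 0$ (Proposition \ref{fnot0}). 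The right side $\tan\theta|x_2|^2$ is strictly positive since $\theta \in (0,\pi/6)$ and $x_2 \neq 0$ (Proposition \ref{fnot0}). This contradiction shows $x_3 \neq 0$.

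I do not expect a genuine obstacle here — the argument is short once Proposition \ref{noi} is available — but I would be careful with the quaternionic bookkeeping and with checking that every quantity I divide by or declare positive ($\cos\theta\sin\theta$, $\tan\theta$, and the $y_1$-coefficient in \eqref{eqnv1} collapsing to exactly $1$) is genuinely nonzero or positive throughout $(0,\pi/6)$; that collapse and the opposition of the two signs at the end are really the crux. If one prefers to avoid Proposition \ref{dependent}, the same contradiction follows by substituting $x_1 = x_4 + \tfrac{1}{\cos\theta\sin\theta}\,y_1$ into \eqref{eqn1} and taking real parts: the commutator $[x_4,y_1]$ is purely imaginary and drops out, leaving $-\tfrac{|y_1|^2}{\cos\theta\sin\theta} = \tan\theta|x_2|^2$, which is again impossible.
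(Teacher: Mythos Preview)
Your argument is correct and follows essentially the same route as the paper: kill the $j$ and $k$ parts of $x_2,y_2$ via \eqref{eqn5j}--\eqref{eqn5k} and \eqref{eqnv2}, deduce $y_3=-2\tan\theta\,y_1$ from \eqref{eqn7j}--\eqref{eqn7k}, feed this into \eqref{eqnv1} to make $x_1-x_4$ a positive multiple of $y_1$, and then contradict \eqref{eqn1}. The paper merely gestures at the last step by saying ``just as in the proof of Proposition~\ref{noi}''; you have written it out (including the pleasant collapse of the $y_1$-coefficient to $1$), and your alternative via real parts that sidesteps Proposition~\ref{dependent} is a nice bonus.
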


\begin{proof}

Suppose for a contradicting that $x_3 = 0$.  By equations \eqref{eqn5j} and \eqref{eqn5k}, $x_2$ has no $j$ or $k$ component.  Since $y_2 = -\tan\theta \,\overline{x}_2$, the $j$ and $k$ components of $y_2$ vanish as well.

Now, equations \eqref{eqn7j} and \eqref{eqn7k} give $y_3 = -2\tan\theta\, y_1$.  In particular, $y_3$ is a negative multiple of $y_1$.  From equation \eqref{eqnv1}, we now see $\cos\theta \sin \theta (x_1 -x_4)$ is a positive multiple of $y_1$.  Then, just as in the proof of Proposition \ref{noi}, this contradicts equation \eqref{eqn1}.

\end{proof}

We also find the $j$ and $k$ components of $x_2$ and $y_2$ are constrained.

\begin{proposition}\label{jkparallel} Let $x_2'$, $y_2'$ denote the projection of $x_2$ and $y_2$ into the $jk$-plane.  Then $\dim_\mathbb{R} \operatorname{span}_\mathbb{R}\{x_1, x_4, y_1, y_3 x_2',y_2'\} = 1$.
\end{proposition}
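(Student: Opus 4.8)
The plan is to cash in Propositions \ref{dependent}, \ref{fnot0}, and \ref{noi}, which together say that $x_1$, $x_4$, $y_1$, $y_3$ all lie on the single real line $\operatorname{span}_{\mathbb{R}}\{y_1\}$ (with $y_1\neq 0$) and that this line lies in the $jk$-plane, since all the $i$-components vanish. So I would first write $x_1 = a\,y_1$, $x_4 = b\,y_1$, $y_3 = c\,y_1$ for real numbers $a,b,c$, which reduces the proposition to showing that $x_2'$ (equivalently $y_2'$, via \eqref{eqnv2}) is also a real multiple of $y_1$.

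The key step is to read $x_2'$ off from equations \eqref{eqn6j} and \eqref{eqn6k}. Substituting $(x_1)_j = a(y_1)_j$ and $(x_4)_j = b(y_1)_j$ into \eqref{eqn6j} gives $2\sqrt{3}(\cos\theta-1)(x_2)_j = -(a\sin^2\theta + b\cos^2\theta)(y_1)_j$, and the analogous substitution in \eqref{eqn6k} yields the same identity with $j$ replaced by $k$. Since $\cos\theta\neq 1$ on $(0,\pi/6)$, the coefficient $2\sqrt{3}(\cos\theta-1)$ is nonzero, so $(x_2)_j$ and $(x_2)_k$ are the same scalar multiple $\mu := -\frac{a\sin^2\theta + b\cos^2\theta}{2\sqrt{3}(\cos\theta-1)}$ of $(y_1)_j$ and $(y_1)_k$ respectively. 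Because $y_1$ has no real part and no $i$-component, this says precisely $x_2' = \mu\,y_1$. Then \eqref{eqnv2} gives $(y_2)_j = \tan\theta\,(x_2)_j$ and $(y_2)_k = \tan\theta\,(x_2)_k$, so $y_2' = \tan\theta\,x_2' = (\mu\tan\theta)\,y_1$. Hence every one of $x_1, x_4, y_1, y_3, x_2', y_2'$ lies in $\operatorname{span}_{\mathbb{R}}\{y_1\}$, which is one-dimensional since $y_1\neq 0$ by Proposition \ref{fnot0}; this gives $\dim_{\mathbb{R}}\operatorname{span}_{\mathbb{R}}\{x_1,x_4,y_1,y_3,x_2',y_2'\} = 1$.

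There is no hard analytic step here: the content is entirely bookkeeping once the earlier propositions are in hand, together with the single observation that $\cos\theta - 1\neq 0$ lets us divide in \eqref{eqn6j} and \eqref{eqn6k}. One could equally well extract $x_2'$ from \eqref{eqn7j} and \eqref{eqn7k} using $y_3 = c\,y_1$ and $(y_2)_{j,k} = \tan\theta\,(x_2)_{j,k}$; the only point to be careful about, either way, is that the two scalar identities coming from the $j$- and $k$-equations carry the \emph{same} proportionality constant, so that $x_2'$ is genuinely a multiple of $y_1$ rather than merely an element of the $jk$-plane.
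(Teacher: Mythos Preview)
Your proof is correct and follows essentially the same route as the paper: both arguments use equations \eqref{eqn6j} and \eqref{eqn6k} (together with $\cos\theta\neq 1$) to express $x_2'$ as a real linear combination of $x_1$ and $x_4$, then invoke \eqref{eqnv2} to get $y_2' = \tan\theta\, x_2'$. The only cosmetic difference is that the paper packages the two component equations into a single quaternionic identity by multiplying \eqref{eqn6j} by $j$, \eqref{eqn6k} by $k$, and adding, whereas you substitute $x_1 = a\,y_1$, $x_4 = b\,y_1$ first and check the proportionality constants agree.
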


\begin{proof}

Recalling that $x_1$ and $x_4$ have no $i$ component by Proposition \ref{no}, we see that multiplying equation \eqref{eqn6j} by $j$ and equation \eqref{eqn6k} by $k$ and adding gives the equation $$(x_2')(\cos\theta - 1)2\sqrt{3} + x_1 \sin^2\theta + x_4 \cos^2\theta = 0.$$  Thus, $x_2'$ is dependent on $x_1$ and $x_4$.  Since $y_2 = -\tan\theta \,\overline{x}_2$, $y_2' = \tan\theta\, x_2'$ so is dependent on $x_2'$.  The result follows.

\end{proof}

\begin{proposition}\label{jk0}  Either $(x_2)_j = 0$ or $(x_2)_k = 0$, but not both.

\end{proposition}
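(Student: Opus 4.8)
The statement splits into two parts. The \emph{``not both''} half follows immediately from results already established: by \eqref{eqn5j} and \eqref{eqn5k} we have $(x_3)_j = \sqrt{3}\,(x_2)_j$ and $(x_3)_k = -\sqrt{3}\,(x_2)_k$, while $x_3\in\operatorname{Im}\mathbb{H}$ has $(x_3)_i = 0$ by Proposition \ref{noi}; hence $x_3 = \sqrt{3}\,((x_2)_j\, j - (x_2)_k\, k)$, and if $(x_2)_j$ and $(x_2)_k$ both vanished we would get $x_3 = 0$, contradicting Proposition \ref{x3not0}.

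The real content is that \emph{at least one} of $(x_2)_j$, $(x_2)_k$ is zero. The plan is to exploit equation \eqref{eqn2}, $-\overline{x_2}\,y_1 + x_3 y_2 - y_2 x_4 = 0$ --- the one relation from Lemma \ref{equations} whose full strength has not yet been used in the argument of this section --- and, more precisely, only its $i$-component. I would first substitute $y_2 = -\tan\theta\,\overline{x_2}$ from \eqref{eqnv2} (so $(y_2)_j = \tan\theta\,(x_2)_j$ and $(y_2)_k = \tan\theta\,(x_2)_k$), then the formulas $(x_3)_j = \sqrt{3}(x_2)_j$, $(x_3)_k = -\sqrt{3}(x_2)_k$, and finally use that $y_1$ and $x_4$ are purely imaginary with vanishing $i$-part (Proposition \ref{noi}, together with $y_1 = \operatorname{Im} y_1$ and $x_4\in\operatorname{Im}\mathbb{H}$) and that $x_4$ and the $jk$-projection $x_2'$ of $x_2$ both lie in $\operatorname{span}_\mathbb{R}\{y_1\}$ (Propositions \ref{dependent}, \ref{jkparallel}, \ref{fnot0}). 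A direct quaternion multiplication then shows that the $i$-component of the left-hand side of \eqref{eqn2} is
\[
\bigl((x_2)_j (y_1)_k - (x_2)_k (y_1)_j\bigr) \;+\; 2\sqrt{3}\,\tan\theta\,(x_2)_j (x_2)_k \;+\; \tan\theta\bigl((x_2)_k (x_4)_j - (x_2)_j (x_4)_k\bigr).
\]
The first and third summands are $2\times2$ determinants of pairs of vectors all proportional to $y_1$, so they vanish; thus \eqref{eqn2} forces $2\sqrt{3}\,\tan\theta\,(x_2)_j (x_2)_k = 0$, and since $\tan\theta\neq0$ on $(0,\pi/6)$ we conclude $(x_2)_j = 0$ or $(x_2)_k = 0$.

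The only genuine labor is the quaternion bookkeeping in expanding that $i$-component, and in particular checking that the two ``cross'' terms really cancel; this is routine but must be done carefully with the signs $ij=k$, $jk=i$, $ki=j$. It is worth recording \emph{why} \eqref{eqn2} is the equation to look at: equations \eqref{eqn6j}/\eqref{eqn6k} and \eqref{eqn7j}/\eqref{eqn7k} are each invariant under interchanging $j$ and $k$, so none of them can by itself single out one of $(x_2)_j$, $(x_2)_k$; the $j$--$k$ asymmetry lives in the opposite signs of \eqref{eqn5j} and \eqref{eqn5k} (which come from the off-diagonal entry of $\phi_3$), and it is precisely in \eqref{eqn2} that those signs, carried in by $x_3$, combine multiplicatively to produce the term $(x_2)_j(x_2)_k$.
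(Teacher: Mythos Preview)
Your proof is correct and follows essentially the same approach as the paper: both extract the $i$-component of equation \eqref{eqn2} (after substituting $y_2=-\tan\theta\,\overline{x_2}$) and use Propositions \ref{dependent} and \ref{jkparallel} to kill the cross terms, leaving $2\sqrt{3}\tan\theta\,(x_2)_j(x_2)_k=0$. The only cosmetic difference is that the paper first rearranges \eqref{eqn2} as $\overline{x_2}(\tan\theta\,x_4 - y_1)=\tan\theta\,x_3\,\overline{x_2}$ and argues structurally that the left side has no $i$-part (splitting $\overline{x_2}$ into its $\mathbb{C}$-part and its $jk$-part), whereas you expand all three summands explicitly and recognize the vanishing ones as $2\times 2$ determinants of proportional vectors; the computations are identical.
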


\begin{proof}  If both are zero, then equations \eqref{eqn5j} and \eqref{eqn5k} give $x_3 = 0$, contradicting Proposition \ref{x3not0}.  We now show at least one vanishes.

We begin by rearranging \eqref{eqn2} into the form $$\overline{x_2}(\tan\theta \,x_4-y_1) = \tan\theta x_3 \overline{x_2}.$$  We write $x_2 = x_2'' + x_2 '$ as a decomposition into the complex components, together with the $j$ and $k$ components.  That is, $x_2''\in \mathbb{C}$ while $x_2'\in\operatorname{span}\{j,k\}$, as before.  Then, the left hand side can be expanded as $x_2''(\tan \theta\, x_4 - y_1) + x_2' (\tan \theta \, x_4 - y_1)$.  Recalling that the $i$ component of $x_4$ and $y_1$ vanishes by Proposition \ref{noi}, $x_2'' (\tan \theta\, x_4 - y_1)\in \operatorname{span}\{j,k\}$.

Further, because $x_2'$ is dependent on both $x_4$ and $y_1$ by Proposition \ref{jkparallel}, $x_2'(\tan\theta x_4 - y_1)\in \mathbb{R}$.  It follows that the left hand side, that is, $x_2(\tan\theta\, x_4 -y_1)$ has no $i$ component.

Hence, the $i$ component of the right hand side, $\tan\theta \, x_3 \overline{x}_2$, must vanish as well.  Since $(x_3)_i = 0$ by Proposition \ref{noi}, the $i$ component of $x_3 \overline{x}_2$ is given by \begin{align*} 0&=  (x_3 \overline{x}_2)_i i\\ &= (x_3)_j j (\overline{x}_2)_k k + (x_3)_k k (\overline{x}_2)_j j\\ &= (-(x_3)_j (x_2)_k + (x_3)_k (x_2)_j)i.\end{align*}  Now, using equations \eqref{eqn5j} and \eqref{eqn5k}, we see that $(x_3)_j = \sqrt{3}(x_2)_j$ and $(x_3)_k = -\sqrt{3}(x_2)_k$.  Substituting this in yields $0 = -2\sqrt{3}(x_2)_j (x_2)_k$, so at least one of $(x_2)_j$ and $(x_2)_k$ vanishes.

\end{proof}

Since we have already shown $\dim_{\mathbb{R}} \operatorname{span}\{x_1, x_4, y_1, y_3, x_2', y_2'\} = 1$ (Proposition \ref{jkparallel}), it follows that, either they all only have a $k$ component, or they all only have a $j$ component.  Equations \eqref{eqn5j} and \eqref{eqn5k} now show that $x_3$ is also in the span of $\{x_1, x_4 , y_y1, _3, x_2', y_2'\}$.

Our next proposition will show that all the variables must commute.

\begin{proposition}  $(x_2)_i = (y_2)_i= 0$.

\end{proposition}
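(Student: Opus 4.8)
The plan is to extract the two vanishings from the one condition in Lemma \ref{equations} that has not yet been fully exploited, namely equation \eqref{eqn2}. First I would assemble what the earlier propositions have already pinned down. By Propositions \ref{dependent}, \ref{noi}, \ref{jkparallel} and \ref{jk0}, together with the remark immediately preceding this statement, the elements $x_1, x_3, x_4, y_1, y_3$ and the $jk$-projection $x_2'$ of $x_2$ all lie on a single real line in $\operatorname{Im}\mathbb{H}$, and that line is either $\mathbb{R}j$ or $\mathbb{R}k$. The two cases are completely parallel, differing only by interchanging the roles of the $j$- and $k$-coefficients throughout, so I would treat the case of $\mathbb{R}j$ and write $x_2 = u + vi + wj$ with $u,v,w\in\mathbb{R}$ (so $(x_2)_k = 0$), $x_3 = cj$, and $\tan\theta\, x_4 - y_1 = \beta j$ for a suitable real $\beta$. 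Here $w = (x_2)_j\neq 0$ by Proposition \ref{jk0}, and $c\neq 0$ by Proposition \ref{x3not0}. Since equation \eqref{eqnv2} gives $(y_2)_i = \tan\theta\,(x_2)_i$, it suffices to prove $v = (x_2)_i = 0$.

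Next I would feed this into equation \eqref{eqn2} in the rearranged form $\overline{x}_2(\tan\theta\, x_4 - y_1) = \tan\theta\, x_3\,\overline{x}_2$ already derived in the proof of Proposition \ref{jk0}, and simply expand both sides using $\overline{x}_2 = u - vi - wj$ and the quaternion multiplication rules. The left side becomes $w\beta + u\beta\, j - v\beta\, k$ and the right side becomes $\tan\theta\, c\,(w + u j + v k)$. Matching the real components (equivalently, the $j$-components) gives $w(\beta - \tan\theta\, c) = 0$, hence $\beta = \tan\theta\, c$ because $w\neq 0$; matching the $k$-components then gives $v(\beta + \tan\theta\, c) = 0$, i.e.\ $2v\beta = 0$. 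If $\beta = 0$, then $\tan\theta\, c = 0$, so $c = 0$ (as $\tan\theta\neq 0$ on $(0,\pi/6)$), which forces $x_3 = 0$ and contradicts Proposition \ref{x3not0}. Hence $v = 0$, that is $(x_2)_i = 0$, and then $(y_2)_i = \tan\theta\,(x_2)_i = 0$.

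I do not anticipate a genuine obstacle here: this is a short scalar computation once the structural propositions are available, and the only things needing a little care are the quaternion bookkeeping and the check that the $\mathbb{R}k$ case produces the identical three scalar relations (it does). The real work was front-loaded into Propositions \ref{dependent}--\ref{jk0}, which reduce everything relevant about $X$ and $Y$ to a one-parameter family and thereby turn \eqref{eqn2} into a purely numerical identity.
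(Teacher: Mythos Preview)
Your argument is correct. Both you and the paper start from the same rearranged form of equation \eqref{eqn2} and the same structural input (Propositions \ref{dependent}--\ref{jk0}), but you extract the conclusion differently. The paper rewrites \eqref{eqn2} as $\tan\theta\,x_4 - y_1 = \tfrac{\tan\theta}{|x_2|^2}\,x_2\,x_3\,\overline{x}_2$ and interprets the right side geometrically: conjugation by the unit quaternion $x_2/|x_2|$ is a rotation of $\operatorname{Im}\mathbb{H}\cong\mathbb{R}^3$ about the axis $\operatorname{Im}(x_2)$, and since this rotation carries the $j$-line to itself, either $\operatorname{Im}(x_2)$ is parallel to $j$ (the desired conclusion) or the rotation reverses orientation on that line, forcing $\operatorname{Im}(x_2)\perp j$ and hence $x_2'=0$, which contradicts $x_3\neq 0$. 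You instead expand $\overline{x}_2(\tan\theta\,x_4 - y_1) = \tan\theta\,x_3\,\overline{x}_2$ component by component and read off $\beta = \tan\theta\,c$ and then $2v\beta = 0$. Your route is shorter and entirely elementary; the paper's route explains conceptually \emph{why} the $i$-component must vanish (the $j$-axis being invariant under a rotation pins down the axis). One small wording issue: the parenthetical ``(equivalently, the $j$-components)'' is not accurate, since the $j$-component yields $u(\beta - \tan\theta\,c)=0$ rather than $w(\beta - \tan\theta\,c)=0$; but this is harmless, as you correctly deduce $\beta = \tan\theta\,c$ from the real part using $w\neq 0$.
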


\begin{proof}  Since $y_2 = -\tan\theta \overline{x}_2$, it is enough to show that $(x_2)_i = 0$.

Equation \eqref{eqn2} can be rearranged into the form $$\tan \theta\, x_4 - y_1 = \frac{\tan\theta}{|x_2|^2} x_2 x_3 \overline{x}_2.$$  By Propositions \ref{noi}, \ref{jkparallel}, and \ref{jk0}, the left hand side, $x_3$, and $x_2$' are all either a real multiple of $j$ or a real multiple of $k$.  For the remainder of the proof, we assume they are all multiples of $j$; the case where they are multiples of $k$ is identical.

The right side is, up to multiple, given by conjugating $x_3$ by the unit quaternion $\frac{x_2}{|x_2|}$.  Recall that a unit quaternion can be written as $q = \cos\theta q_0 + \sin\theta q_1$ where $q_0$ is real and $q_1$ is purely imaginary and $|q_0| = |q_1| = 1$.  Then conjugation by $q$, viewed as a map from $\mathbb{R}^3\cong \operatorname{Im}(\mathbb{H})$ to itself, is a rotation with axis given by $q_1$ and with rotation angle given by $2\theta$.

Since the $j$-axis is invariant under conjugation by $x_2$, we see one of two things happen.  Either the $j$-axis is fixed point wise, in which case $\operatorname{Im}(x_2)$ has only a $j$ component, or the orientation of it is reversed.  We now show the latter case cannot occur.

If the orientation is reversed, the the rotation axis $\operatorname{Im}(x_2)$ must be perpendicular to $j$, so $\operatorname{Im}(x_2)\in \operatorname{span}\{i,k\}$.  Because $x_2'$ has no $k$ part, so it follows that $x_2' = 0$.  But then, using equations \eqref{eqn5j} and \eqref{eqn6j}, we see that $x_3 = 0$, contradicting Proposition \ref{x3not0}.

\end{proof}

It follows that $\operatorname{Im}(x_2) = x_2'$.  Summarizing, we have now shown that at a point containing a $0$ curvature plane with $\theta \in (0,\pi/6)$, that $x_2 ' = \operatorname{Im}(x_2)$, $y_2' = \operatorname{Im}(y_2)$, that $\dim_\mathbb{R} \operatorname{span}\{x_1,x_3, x_4, y_1, y_3, x_2', y_2'\} = 1$ and further, that each element in this set has vanishing $i$ and $j$ components or vanishing $i$ and $k$ components.  In particular, the variables $x_1, x_2, x_3, x_4, y_1, y_2,$ and $y_3$ all commute.  Thus, after substituting $y = -\tan \theta\, \overline{x}_2$ into equation \eqref{eqn2} and then canceling all occurrences of $\overline{x}_2$, we may replace equation \eqref{eqn2} with the linear equation \begin{equation*}\tan\theta\, x_4 - \tan\theta \, x_3 -y_1 = 0.\end{equation*}

We let $\ell\in\{j,k\}$ and set $\epsilon = 1$ if $\ell = j$ and $\epsilon = -1$ if $\ell = k$.  Then, equations \eqref{eqn2} through \eqref{eqn7k} are equivalent to the following homogeneous system of linear equations.

\begin{align*} -\tan\theta (x_3)_\ell +\tan\theta (x_4)_{\ell} - (y_1)_\ell &= 0\\ \cos\theta\sin\theta (x_1)_{\ell} - \cos\theta\sin\theta (x_4)_{\ell} +(\sin^2\theta - \cos^2\theta)(y_1)_{\ell} + \cos\theta \sin\theta (y_3)_{\ell} &= 0\\ \tan\theta (x_2)_\ell -(y_2)_\ell &= 0 \\ \sqrt{3}(x_2)_\ell + \epsilon (x_3)_\ell &= 0\\ \sin^2 \theta (x_1)_\ell  + 2\sqrt{3}(\cos\theta - 1)(x_2)_\ell  + \cos^2\theta (x_4)_\ell &=0\\ 2\sin\theta\cos\theta (y_1)_\ell -2\sqrt{3}\sin\theta (y_2)_\ell + \cos^2\theta (y_3)_\ell &= 0
\end{align*}

Then one can easily compute that all solutions are given as real multiples of \begin{equation}\label{last}\begin{bmatrix} (x_1)_\ell \\ (x_2)_\ell \\ (x_3)_\ell \\ (x_4)_\ell \\ (y_1)_\ell \\ (y_2)_\ell \\ (y_3)_\ell \end{bmatrix} = \begin{bmatrix} -3\cos\theta((2+\epsilon)\cos^2 \theta - 4\cos\theta + 2)\\ -\sqrt{3}\cos\theta \\ 3\epsilon\cos\theta \\ -3 (\cos\theta - 1)( (2+\epsilon) \cos^2\theta  +(\epsilon - 2)\cos\theta - 2)\\ -3\tan\theta((2+\epsilon)\cos^3\theta -4\cos^2\theta + 2)\\ -\sqrt{3} \sin\theta \\ 6\tan^2\theta ((2+\epsilon)\cos^3\theta - 4\cos^2\theta +1)  \end{bmatrix}.\tag{$\ast_3$}\end{equation}

We now note that equation \eqref{eqn1} is equivalent to $y_1(x_1 - x_4) = \tan\theta |x_2|^2$.  In particular, equation \eqref{eqn1} implies that $y_1(x_1  -x_4) > 0 $.  Thus, if we can show that for $\theta \in (0,\pi/6)$, \ref{last} implies $y_1(x_1 - x_4) < 0 $, we will have reached our final contradiction, showing $N_9$ is positively curved when $\theta \in (0,\pi/6)$.

\begin{proposition}

For $\theta \in (0,\pi/6)$, $y_1(x_1-x_4) <  0$.

\end{proposition}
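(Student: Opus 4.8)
The plan is to substitute the explicit solution $\eqref{last}$ into the product $y_1(x_1-x_4)$ and read off its sign. By this stage we know that $x_1$, $x_4$ and $y_1$ are purely imaginary and all lie in the one–dimensional real span of a single fixed $\ell\in\{j,k\}$; writing $x_1=(x_1)_\ell\,\ell$, $x_4=(x_4)_\ell\,\ell$, $y_1=(y_1)_\ell\,\ell$ and using $\ell^2=-1$ gives
\[
y_1(x_1-x_4)=-(y_1)_\ell\bigl((x_1)_\ell-(x_4)_\ell\bigr),
\]
so it suffices to show that the real numbers $(y_1)_\ell$ and $(x_1)_\ell-(x_4)_\ell$ are both nonzero and have the same sign for every $\theta\in(0,\pi/6)$. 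Throughout I will write $c=\cos\theta$, which ranges over $(\sqrt3/2,1)$, and handle the two cases $\epsilon=1$ (so $\ell=j$) and $\epsilon=-1$ (so $\ell=k$) side by side.

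First I would compute $(x_1)_\ell-(x_4)_\ell$ from the first and fourth entries of $\eqref{last}$. On expanding, the cubic and quadratic terms cancel, leaving the linear expression $(x_1)_\ell-(x_4)_\ell=3\bigl(2-(2+\epsilon)\cos\theta\bigr)$. For $\epsilon=-1$ this equals $3(2-\cos\theta)>0$ since $\cos\theta<1$; for $\epsilon=1$ it equals $3(2-3\cos\theta)<0$ since $\cos\theta>\cos(\pi/6)=\sqrt3/2>\tfrac23$. Thus $(x_1)_\ell-(x_4)_\ell$ is positive in the first case, negative in the second, and in either case nonzero on $(0,\pi/6)$.

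Next I would use the fifth entry of $\eqref{last}$, namely $(y_1)_\ell=-3\tan\theta\,p_\epsilon(\cos\theta)$ with $p_\epsilon(c)=(2+\epsilon)c^3-4c^2+2$; since $\tan\theta>0$ on $(0,\pi/6)$, everything reduces to the sign of $p_\epsilon$ on $(\sqrt3/2,1)$. For $\epsilon=-1$, $p_{-1}'(c)=c(3c-8)<0$ there, so $p_{-1}$ is decreasing, and $p_{-1}(\sqrt3/2)=\tfrac{3\sqrt3}{8}-1<0$; hence $p_{-1}<0$ throughout, so $(y_1)_\ell>0$. For $\epsilon=1$, $p_1'(c)=c(9c-8)$ has its only critical point in the interval at $c=8/9$, which is a minimum, and $p_1(8/9)=\tfrac{690}{729}>0$; hence $p_1>0$ throughout, so $(y_1)_\ell<0$. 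In both cases $(y_1)_\ell$ and $(x_1)_\ell-(x_4)_\ell$ carry the same sign, so their product is positive and $y_1(x_1-x_4)=-(y_1)_\ell\bigl((x_1)_\ell-(x_4)_\ell\bigr)<0$. This contradicts the consequence $y_1(x_1-x_4)>0$ of equation $\eqref{eqn1}$, completing the proof of Theorem $\ref{precise}$.

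The step I expect to require the most care is the last one: verifying that the cubic $p_\epsilon$ has no zero in $(\sqrt3/2,1)$, and in particular checking for $\epsilon=1$ that the minimum value $p_1(8/9)$ is still positive rather than relying on the endpoints alone. Everything else is short polynomial algebra, made especially clean by the cancellation that collapses $(x_1)_\ell-(x_4)_\ell$ to a linear function of $\cos\theta$.
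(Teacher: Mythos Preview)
Your proof is correct and follows essentially the same approach as the paper: compute $(x_1)_\ell-(x_4)_\ell$ from \eqref{last}, determine the sign of $(y_1)_\ell$ via the cubic $p_\epsilon(c)=(2+\epsilon)c^3-4c^2+2$, and conclude using $\ell^2=-1$. The only difference is cosmetic: you justify the sign of $p_\epsilon$ on $(\sqrt3/2,1)$ by a derivative/critical-point argument, whereas the paper simply asserts the sign.
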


\begin{proof}

We first note that a simple calculation shows $$(x_1)_\ell - (x_4)_\ell = 6 - (6 + 3 \epsilon)\cos\theta.$$

We first prove $y_1(x_1-x_4) < 0$ when $\ell = j$, that is, $\epsilon = 1$.  In this case, $(x_1 - x_4)_{j} = 6- 9\cos\theta$ and this is negative so long as $\cos\theta > \frac{2}{3}$.  Of course, since $\cos(\pi/6) = \frac{\sqrt{3}}{{2}} > \frac{2}{3}$, $(x_1 - x_4)_j < 0$ on $(0,\pi/6)$.

Further, $(y_1)_j = -3\tan\theta (3\cos^3\theta -4\cos^2\theta + 2)$.  The polynomial $3x^3 - 4x^2 + 2$ is clearly positive on the interval $\left(\frac{\sqrt{3}}{{2}}, 1\right)$, so $(y_1)_j < 0$.

It follows that $y_1(x_1 - x_4) = (y_1)_j (x_1 - x_4)_j j^2 = - (y_1)_j(x_1 - x_4)_j < 0$.

\

Finally, we prove $y_1(x_1-x_4) < 0$ when $\ell = k$, that is, $\epsilon = -1$.  Then it is easy to see that $(y_1)_k$ is positive since the polynomial $x^3 - 4x^2 + 1$ is negative on the interval $\left(\frac{\sqrt{3}}{{2}}, 1\right)$.  Further, if $\epsilon = -1$, then $(x_1)_k - (x_4)_k = 6 - 3\cos\theta  > 0$.

Thus, $y_1(x_1 - x_4) = (y_1)_k (x_1 - x_4)_k k^2 = - (y_1)_k (x_1 - x_4)_k < 0$, as claimed.

\end{proof}

\end{document}